\def\Ebox#1#2{%
\medskip
\begin{center}
  \strut\epsfxsize=#1 \hsize\epsfbox{#2}
\end{center}
\smallbreak}
\def\Ebox#1#2{%
\medskip
\begin{center}
  \strut\epsfxsize=#1 \hsize\epsfbox{#2}
\end{center}
\smallbreak}
\def\IR{I \kern-0.35em R\,}
\newcommand{\sy}[1]{{\color{black} #1}}
\def\l1{L_1}
\def\l2{L_2}
\newlength{\noteWidth}
\long\def\notes#1{\ifinner
           {\tiny #1}
           \else
           \marginpar{\barbox[t]{\noteWidth}{\raggedright\tiny #1}}
       \fi\typeout{#1}}
  \newtheorem{remark}{Remark}
  \newtheorem{example}{Example}
\begin{document}

\title{A Universal Dynamic Program and Refined Existence Results for Decentralized Stochastic Control}

\author{Serdar Y\"uksel
\thanks{S. Y\"uksel is with the Department of Mathematics and
    Statistics, Queen's University, Kingston, Ontario, Canada, K7L
    3N6.  Email: yuksel@mast.queensu.ca. This research was
    partially supported by the Natural Sciences and Engineering Research Council of Canada (NSERC). Part of this work has been presented at the 2018 IEEE Conference on Decision and Control.}
}

\maketitle

\begin{abstract}
For sequential stochastic control problems with standard Borel measurement and control action spaces, we introduce a general (universally applicable) dynamic programming formulation, establish its well-posedness, and provide new existence results for optimal policies. Our dynamic program builds in part on Witsenhausen's standard form, but with a different formulation for the state, action, and transition dynamics. Using recent results on measurability properties of strategic measures in decentralized control, we obtain a standard Borel controlled Markov model. This allows for a well-defined dynamic programming recursion through universal measurability properties of the value functions for each time stage. In addition, new existence results are obtained for optimal policies in decentralized stochastic control. These state that for a static team with independent measurements, it suffices for the cost function to be continuous (only) in the actions for the existence of an optimal policy under mild compactness (or tightness) conditions. These also apply to dynamic teams which admit static reductions with independent measurements through a change of measure transformation. We show through a counterexample that weaker conditions may not lead to existence of an optimal team policy.  The paper's existence results generalize those previously reported in the literature. A summary of and comparison with previously reported results and some applications are presented.
\end{abstract}

\begin{AMS}
93E20, 90B99, 49J99 	
\end{AMS}

\section{Introduction}


An increasingly important research area of mathematical and practical interest is {\it decentralized stochastic control} or {\it team theory}, which involves multiple decision makers (DMs) who strive for a common goal but who have access only to local information. Applications include energy systems, the smart grid, sensor networks, and networked control systems, among others \cite{YukselBasarBook}. Few results are known regarding systematic methods to arrive at optimal solutions, and there exist problems (such as Witsenhausen's counterexample \cite{wit68}) which have defied solution attempts for more than 50 years. 


In this paper, we present a general (universally applicable) dynamic programming formulation for sequential decentralized stochastic control, establish the well-posedness of the dynamic program, and through this program and additional analysis involving decentralized strategic measures, obtain new existence results which generalize and complement previously reported results. 
 

It is worth noting that in the theory of commonly studied single DM stochastic control, there are few main techniques which are utilized to establish the existence and structure of optimal policies for finite horizon and infinite horizon problems. One such approach is based on {\it dynamic programming} and the corresponding {\it measurable selection criteria}, which has been summarized elaborately in \cite[Theorem 3.3.5 and Appendix D]{HernandezLermaMCP} and \cite{hernandezlasserre1999further} among many other references. Another method is based on the properties of {\it strategic measures} \cite{schal1975dynamic,piunovskii1998controlled,dynkin1979controlled,feinberg1996measurability}. These two techniques provide comprehensive sufficient conditions for existence results. One could also mention techniques based on empirical measures, which are particularly useful for infinite horizon problems, for both the average cost as well as discounted cost criteria, leading to linear programming/convex analytical conditions (e.g., \cite[Chp. 6]{HernandezLermaMCP}\cite{Bor02}). 

However, all of these approaches in general crucially require that the information is expanding over time, in the sense that the $\sigma$-field generated by the information available at the decision maker at time $n$ is a subset of that generated by the information at $n+1$ and so on. In decentralized stochastic control, this nestedness condition does not occur in general, and hence the classical methods are not typically applicable.

Our paper is closely related to Witsenhausen's standard form \cite{WitsenStandard}, which entails a general approach establishing that {\it any} sequential team optimization may admit a formulation appropriate for a dynamic programming analysis. However, well-posedness of a dynamic program or existence results were not established in \cite{WitsenStandard} for standard Borel (we recall that a metric space which is complete and separable is called a Polish space and a Borel subset of a Polish space is a {\it standard Borel} space \cite{srivastava2008course}) models and Witsenhausen notes that {\it ...a deeper understanding of these problems, including an existence theory for nonclassical patterns, will require more insight into some delicate questions in analysis...}. In this paper, one of our goals is to address some of the issues raised and left open by Witsenhausen. We present a detailed review of \cite{WitsenStandard} in Section \ref{WitStandSec} (prior to a comparison with the program proposed in our paper).

That dynamic programming can be a useful tool for a class of dynamic teams has been known since 1970s. Clearly, if all the information at any given decision maker is common knowledge between all decision makers, then the system can be viewed to be a centralized system and standard dynamic programming is applicable. However, if only some of the system variables are common knowledge, the remaining unknowns may or may not lead to a computationally tractable program generating an optimal solution. A possible approach toward establishing a tractable program is through the construction of a controlled Markov chain where the controlled Markov state may now live in a larger state space (for example a space of probability measures) and the actions are elements in possibly function spaces. This controlled Markov construction may lead to a computation of optimal policies. Such a {\it dynamic programming approach} has been adopted extensively in the literature (see for example, \cite{Athans}, \cite{yos75}, \cite{ChongAthans}, \cite{AicardiDavoli}, \cite{YukTAC09}, \cite{lamperski2015optimal} and significantly generalized and termed as the {\it common information approach} in \cite{NayyarMahajanTeneketzis} and \cite{NayyarBookChapter}) through the use of a team-policy which uses common information to generate partial functions for each DM to generate their actions using local information. This construction requires a common knowledge among decision makers, which is a unifying assumption in the aforementioned contributions in the literature. We refer the reader to \cite[Chapter 12]{YukselBasarBook} for a comprehensive discussion and some applications of this approach to zero-delay communications and networked control. On the other hand, the construction to be presented in this paper is applicable to an arbitrary sequential team. 

It is also worth mentioning related efforts in the stochastic control literature dealing with systems under partial information, e.g., which include \cite{rishel1970necessary,jacka2015informational}. These provide a dynamic programming formulation for a class of stochastic control problems with {\it perfect recall} leading to a generalized form for a Bellman (martingale) optimality principle for a class systems, see  \cite[Definition 4.1, Theorem 5.7]{jacka2015informational} (see also \cite{elliott1977optimal} for a controlled-Markovian and perfect-recall setup). The assumption that the information structure is classical (due to the perfect-recall property, see Section \ref{WitStandSec}) in these papers allows for explicit optimality equations for each sample path. Also along the dynamic programming based approaches, related contributions for a class of decentralized differential stochastic control systems (in continuous-time) are \cite{charalambous2016decentralized} and \cite{charalambous2017centralizedI} which have established the Bellman (martingale) and dynamic programming equations to arrive at optimality conditions. Notably, these results also establish conditions for the global optimality of person-by-person-optimal policies for a class of convex problems. 


With regard to the strategic measures approach, for classical stochastic control problems such measures were defined (see \cite{schal1975dynamic}, \cite{piunovskii1998controlled}, \cite{dynkin1979controlled} and \cite{feinberg1996measurability}) as the set of probability measures induced on the product (sequence) spaces of the state and action pairs by measurable control policies: Given an initial distribution on the state, and a policy, one can uniquely define a probability measure on the product space. Certain measurability, compactness and convexity properties of strategic measures for single decision maker problems were studied in \cite{dynkin1979controlled,piunovskii1998controlled,feinberg1996measurability,blackwell1976stochastic}. 

In \cite{YukselSaldiSICON17} strategic measures for decentralized stochastic control were introduced and many of their properties were established. For decentralized stochastic control problems, considering the set of strategic measures and compactification or convexification of these sets of measures through introducing private or common randomness allow for placing a useful topology, that of weak convergence of probability measures, on the strategy spaces. Combined with a compactness condition, this allowed for establishing the existence of optimal team policies in \cite{YukselSaldiSICON17}, where compactness, convexity and Borel measurability properties were established, which will be utilized later in the paper. It was shown in particular that, strategic measures for decentralized stochastic control can behave drastically different when compared with classical stochastic control problems. In addition to \cite{YukselSaldiSICON17}, in \cite{gupta2014existence} existence of optimal policies for static and a class of sequential dynamic teams have been studied, where conditions on the weak compactness of the sets of strategic measures were established. We provide a literature review on existence results in Section \ref{exis}.


\noindent{\bf Contributions.} In view of the review above, the paper makes two main contributions. 
\begin{itemize}
\item[(i)] For sequential stochastic control problems with standard Borel measurement and control action spaces, we introduce a dynamic programming formulation which is universally applicable, establish its well-posedness, and provide existence results for optimal policies. Our dynamic program builds in part on Witsenhausen's standard form, but with a different formulation for the state, action, and transition dynamics and optimality analysis. We show that the dynamic program is well-posed: Using recent results on measurability properties of strategic measures in decentralized control, we obtain a controlled Markov model with standard Borel state and state dependent action sets. This allows for well-defined dynamic programming recursions through universal measurability properties of the value functions for each time stage. 
\item[(ii)] We present existence results which significantly generalize those previously reported in the literature: These state that for a static team with independent measurements, it suffices for the cost function to be continuous (only) in the actions for the existence of an optimal policy under mild compactness conditions. These also apply to dynamic teams which admit static reductions with independent measurements through a change of measure transformation. Since continuity (only) in the actions cannot be in general relaxed, the existence condition cannot in general be made weaker. In particular, Theorems \ref{SuffCon2}, \ref{SuffCon2'''} and Theorem \ref{existenceRelaxed3} are the most general existence results, to our knowledge, for sequential team problems, as we elaborate on later in the paper.
\end{itemize}


%
%

\section{Supporting Results}

\subsection{Sequential dynamic teams and Witsenhausen's characterization of information structures}\label{witsenInfoStructureReview}

Witsenhausen's contributions (e.g., \cite{wit75,wit88,WitsenStandard}) to dynamic teams and characterization of information structures have been crucial in our understanding of dynamic teams. In this section, we introduce the characterizations as laid out by Witsenhausen, termed as {\it the Intrinsic Model} \cite{wit75}; see \cite{YukselBasarBook} for a more comprehensive overview and further characterizations and classifications of information structures. In this model (described in discrete time), any action applied at any given time is regarded as applied by an individual decision maker/agent, who acts only once. One advantage of this model, in addition to its generality, is that the characterizations regarding information structures can be concisely described.

Suppose that in the decentralized system considered below, there is a pre-defined order in which the decision makers act. Such systems are called {\it sequential systems} (for non-sequential teams, we refer the reader to Andersland and Teneketzis \cite{AnderslandTeneketzisI}, \cite{AnderslandTeneketzisII} and Teneketzis \cite{Teneketzis2}, in addition to Witsenhausen \cite{WitsenhausenSIAM71} and \cite[p. 113]{YukselBasarBook}). Suppose that in the following, the action and measurement spaces are standard Borel spaces, that is, Borel subsets of Polish (complete, separable and metric) spaces. In the context of a sequential system, the {\it Intrinsic Model} has the following components:

\begin{itemize}
\item A collection of {\it measurable spaces} $\{(\Omega, {\cal F}),
(\mathbb{U}^i,{\cal U}^i), (\mathbb{Y}^i,{\cal Y}^i), i \in {\cal N}\}$, with ${\cal N}:=\{1,2,\cdots,N\}$, specifying the system's distinguishable events, and the control and measurement spaces. Here $N=|{\cal N}|$ is the number of control actions taken, and each of these actions is taken by an individual (different) DM (hence, even a DM with perfect recall can be
regarded as a separate decision maker every time it acts). The pair $(\Omega, {\cal F})$ is a
measurable space (on which an underlying probability may be defined). The pair $(\mathbb{U}^i, {\cal U}^i)$
denotes the measurable space from which the action, $u^i$, of decision maker $i$ is selected. The pair $(\mathbb{Y}^i,{\cal Y}^i)$ denotes the measurable observation/measurement space for DM $i$.

\item A {\it measurement constraint} which establishes the connection between the observation variables and the system's distinguishable events. The $\mathbb{Y}^i$-valued observation variables are given by $y^i=\eta^i(\omega,{\bf u}^{[1,i-1]})$, ${\bf u}^{[1,i-1]}=\{u^k, k \leq i-1\}$, $\eta^i$ measurable functions and $u^k$ denotes the action of DM $k$. Hence, the information variable $y^i$ induces a $\sigma$-field, $\sigma({\cal I}^i)$ over $\Omega \times \prod_{k=1}^{i-1} \mathbb{U}^k$
\item A {\it design constraint} which restricts the set of admissible $N$-tuple control laws $\underline{\gamma}= \{\gamma^1, \gamma^2, \dots, \gamma^N\}$, also called
{\it designs} or {\it policies}, to the set of all
measurable control functions, so that $u^i = \gamma^i(y^i)$, with $y^i=\eta^i(\omega,{\bf u}^{[1,i-1]})$, and $\gamma^i,\eta^i$ measurable functions. Let $\Gamma^i$ denote the set of all admissible policies for DM $i$ and let ${\bf \Gamma} = \prod_{k} \Gamma^k$.
\end{itemize}

We note that, the intrinsic model of Witsenhausen gives a set-theoretic characterization of information fields, however, for standard Borel spaces, the model above is equivalent to that of Witsenhausen's.

One can also introduce a fourth component \cite{YukselBasarBook}:
\begin{itemize}
\item A {\it probability measure} $P$ defined on $(\Omega, {\cal F})$ which describes the measures on the random events in the model. 
\end{itemize}

Under this intrinsic model, a sequential team problem is {\it dynamic} if the
information available to at least one DM is affected by the action of at least one other DM. A decentralized problem is {\it static}, if the information available at every decision maker is only affected by exogenous disturbances (Nature); that is no other decision maker can affect the information at any given decision maker.

Information structures can also be classified as {\it classical}, {\it quasi-classical} or {\it nonclassical}. An Information Structure (IS) $\{y^i, 1 \leq i \leq N \}$ is {\it classical} if $y^i$ contains all of the information available to DM $k$ for $k < i$. An IS is {\it quasi-classical} or {\it partially nested}, if whenever $u^k$, for some $k < i$, affects $y^i$ through the measurement function $\eta^i$, $y^i$ contains $y^k$ (that is $\sigma(y^k) \subset \sigma(y^i)$). An IS which is not partially nested is {\it nonclassical}.

Let \[\underline{\gamma} = \{\gamma^1, \cdots, \gamma^N\}\]
and let a cost function be defined as:
\begin{eqnarray}\label{lossF}
J(\underline{\gamma}) = E^{\underline{\gamma}}[c(\omega_0,{\bf u})] = E[c(\omega_0,\gamma^1(y^1),\cdots,\gamma^N(y^N))],
\end{eqnarray}
for some non-negative measurable loss (or cost) function $c: \Omega_0 \times \prod_k \mathbb{U}^k \to \mathbb{R}_+$. Here, we have the notation ${\bf u}=\{u^t, t \in {\cal N}\}$, and $\omega_0$ may be viewed as the cost function relevant exogenous variable contained in $\omega$. 

\begin{definition}\label{Def:TB1}\index{Optimal team cost}
For a given stochastic team problem with a given information
structure, $\{J; \Gamma^i, i\in {\cal N}\}$, a policy (strategy) $N$-tuple
${\underline \gamma}^*:=({\gamma^1}^*,\ldots, {\gamma^N}^*)\in {\bf \Gamma}$ is
an {\it optimal team decision rule} ({\it team-optimal
decision rule} or simply {\it team-optimal solution}) if
\begin{equation}J({\underline \gamma}^*)=\inf_{{{\underline \gamma}}\in {{\bf \Gamma}}}
J({{\underline \gamma}})=:J^*. \label{eq:5}
\end{equation} 
The expected cost achieved by this strategy, $J^*$, is the optimal team cost.
\end{definition}

In the following, we will denote by bold letters the ensemble of random variables across the DMs; that is ${\bf y}=\{y^i, i=1,\cdots,N\}$ and ${\bf u}=\{u^i, i=1,\cdots,N\}$.

\subsection{Independent-measurements reduction of sequential teams under a new probability measure}\label{EquivIS}
 
Following Witsenhausen \cite[Eqn (4.2)]{wit88}, as reviewed in \cite[Section 3.7]{YukselBasarBook}, we say that two information structures are equivalent if: (i) The policy spaces are equivalent/isomorphic in the sense that policies under one information structure are realizable under the other information structure, (ii) the costs achieved under equivalent policies are identical almost surely, and (iii) if there are constraints in the admissible policies, the isomorphism among the policy spaces preserves the constraint conditions. 

A large class of sequential team problems admit an equivalent information structure which is static. This is called the {\it static reduction} of a dynamic team problem. For the analysis of our paper, we need to go beyond a static reduction, and we will need to make the measurements independent of each other as well as $\omega_0$. This is not possible for every team which admits a static reduction, for example quasi-classical team problems with LQG models \cite{HoChu} do not admit such a further reduction, since the measurements are partially nested. Witsenhausen refers to such an information structure as {\it independent static} in \cite[Section 4.2(e)]{wit88}.

%

Consider now (a static or a dynamic) team setting according to the intrinsic model where each DM $t$ measures $y^t=g_t(\omega_0,\omega_t,y^1,\ldots,y^{t-1},u^1,\ldots,u^{t-1})$, and the decisions are generated by $u^t=\gamma^t(y^t)$, with $1 \leq t \leq N$. Here $\omega_0,\omega_1,\cdots,\omega_N$ are primitive (exogenous) variables. We will indeed, for every $1 \leq n \leq N$, view the relation
 \[P(dy^n | \omega_0,y^1,y^2,\cdots,y^{n-1}, u^1,u^2,\cdots,u^{n-1}),\] as a (controlled) stochastic kernel (to be defined later), and through standard stochastic realization results (see \cite[Lemma 1.2]{gihman2012controlled} or \cite[Lemma 3.1]{BorkarRealization}), we can represent this kernel in a functional form through $y^n=g_n(\omega_0,\omega_n,y^1,y^2,\cdots,y^{n-1},u^1,u^2,\cdots,u^{n-1})$ for some independent $\omega_n$ and measurable $g_n$. 

This team admits an {\it independent-measurements} reduction provided that the following absolute continuity condition holds: For every $t \in {\cal N}$, there exists a reference probability measure $Q_t$ and a function $f_t$ such that for all Borel $S$:
\begin{eqnarray}
&& P(y^t \in S | \omega_0,u^1,u^2,\cdots,u^{t-1}, y^1,y^2,\cdots,y^{t-1}) \nonumber \\
&& \quad \quad = \int_{S} f_t(y^t,\omega_0,u^1,u^2,\cdots,u^{t-1},y^1,y^2,\cdots,y^{t-1}) Q_t(dy^t),
\end{eqnarray}

We can then write (since the action of each DM is determined by the measurement variables under a policy)
\begin{eqnarray}
&& P(d\omega_0,d{\bf y}, d{\bf u}) \nonumber \\
&&= P(d\omega_0) \prod_{t=1}^N \bigg(f_t(y^t,\omega_0,u^1,u^2,\cdots,u^{t-1},y^1,y^2,\cdots,y^{t-1}) Q_t(dy^t) 1_{\{\gamma^t(y^t) \in du^t\}}\bigg). \nonumber
\end{eqnarray}
The cost function $J(\underline{\gamma})$ can then be written as
\[J(\underline{\gamma})= \int P(d\omega_0) \prod_{t=1}^N (f_t(y^t,\omega_0,u^1,u^2,\cdots,u^{t-1},y^1,y^2,\cdots,y^{t-1}) Q_t(dy^t))  c(\omega_0,{\bf u}),\]
with $u^k = \gamma^k(y^k)$ for $1 \leq k \leq N$, and where now the measurement variables can be regarded as independent from each other, and also from $\omega_0$, and by incorporating the $\{f_t\}$ terms into $c$, we can obtain an equivalent {\it static team} problem. Hence, the essential step is to appropriately adjust the probability space and the cost function.

The new cost function may now explicitly depend on the measurement values, such that
\begin{eqnarray}
c_s(\omega_0,{\bf y}, {\bf u}) = c(\omega_0,{\bf u}) \prod_{t=1}^N f_t(y^t,\omega_0,u^1,u^2,\cdots,u^{t-1},y^1,y^2,\cdots,y^{t-1}). \label{c_sDefn}
\end{eqnarray}

Here we can reformulate even a static team to one which is, clearly still static, but now with independent measurements which are also independent from the cost relevant exogenous variable $\omega_0$. Such a condition is not restrictive; Witsenhausen's counterexample, to be reviewed later below in Section \ref{WitsenSection}, as well as the conditions required in \cite{gupta2014existence} satisfy this. 

We note that Witsenhausen, in \cite[Eqn (4.2)]{wit88}, considered a standard Borel setup; as Witsenhausen notes, a static reduction always holds when the measurement variables take values from countable set since a reference measure as in $Q_t$ above can be constructed on the measurement variable $y^t$ (e.g., $Q_t(z) = \sum_{i \geq 1} 2^{-i} 1_{\{z = m_i\}}$ where $\mathbb{Y}^t=\{m_i, i \in \mathbb{N}\}$) so that the absolute continuity condition always holds. We refer the reader to \cite{charalambous2016decentralized} for relations with classical continuous-time stochastic control where the relation with Girsanov's classical measure transformation is recognized, and \cite[p. 114]{YukselBasarBook} for further discussions.

\subsection{The strategic measures approach and a review of some related results}

In \cite{YukselSaldiSICON17}, strategic measures for decentralized stochastic control were studied. In this section, we review some of the results in \cite{YukselSaldiSICON17} that will be relevant in the dynamic programming approach to be presented in the following section. 

For single-DM stochastic control problems, {\it strategic measures} are defined (see \cite{schal1975dynamic}, \cite{piunovskii1998controlled}, \cite{dynkin1979controlled} and \cite{feinberg1996measurability}) as the set of probability measures induced on the sequence spaces of the state and action pairs by measurable control policies. In \cite{YukselSaldiSICON17}, through such a strategic measures formulation, existence, convexity and Borel measurability properties were established for decentralized stochastic control. It was shown in particular that, strategic measures for decentralized stochastic control can behave drastically different when compared with classical stochastic control problems.

\subsubsection{Sets of strategic measures for static teams}

Consider a static team problem defined under Witsenhausen's intrinsic model in Section \ref{witsenInfoStructureReview}. Let $L_A(\mu)$ be the set of strategic measures induced by all admissible team policies with $(\omega_0, {\bf y}) \sim \mu$. In the following, $B = B^0 \times \prod_{k} (A^k \times B^k)$ are used to denote the Borel sets in $\Omega_0 \times \prod_k (\mathbb{Y}^k \times \mathbb{U}^k)$,
\begin{eqnarray}
L_A(\mu)&:=& \bigg\{P \in {\cal P}\bigg(\Omega_0 \times \prod_{k=1}^N (\mathbb{Y}^k \times \mathbb{U}^k)\bigg): \nonumber \\
&& P(B) = \int_{B^0 \times \prod_k A^k} \mu(d\omega_0, d{\bf y}) \prod_k 1_{\{u^k = \gamma^k(y^k) \in B^k\}}, \nonumber \\
&& \qquad \qquad \qquad \qquad \gamma^k \in \Gamma^k, B \in {\cal B}(\Omega_0 \times \prod_k (\mathbb{Y}^k \times \mathbb{U}^k)) \bigg\} \nonumber
\end{eqnarray}
Let $L_A(\mu,\underline{\gamma})$ be the strategic measure induced by a particular $\underline{\gamma} \in {\bf \Gamma}$. 

Let $L_R(\mu)$ be the set of strategic measures induced by all admissible team policies where $\omega_0, {\bf y} \sim \mu$ and policies are individually randomized (that is, with independent randomizations):
\[L_R(\mu) := \bigg\{P \in {\cal P}\bigg(\Omega_0 \times \prod_{k=1}^N (\mathbb{Y}^k \times \mathbb{U}^k)\bigg): P(B) = \int_{B} \mu(d\omega_0, d{\bf y}) \prod_k \Pi^k(d u^k| y^k)\bigg\}\]
where $\Pi^k$ takes place from the set of stochastic kernels from $\mathbb{Y}^k$ to $\mathbb{U}^k$ for each $k$.

\subsubsection{Sets of strategic measures for dynamic teams}

We present the following characterization for strategic measures in dynamic sequential teams. Let for all $n \in {\cal N}$, 
\[h_n = \{\omega_0,y^1,u^1,\cdots,y^{n-1},u^{n-1},y^n,u^n\},\]
 and $p_n(dy^n|h_{n-1}) := P(dy^n | h_{n-1})$ be the transition kernel characterizing the measurements of DM $n$ according to the intrinsic model. We note that this may be obtained by the relation:
\begin{eqnarray}
&& p_n(y^n \in \cdot | \omega_0,y^1,u^1,\cdots,y^{n-1},u^{n-1}) \nonumber \\
&& \quad \quad := P\bigg(\eta^n(\omega, u^{1},\cdots,u^{n-1}) \in \cdot  \bigg| \omega_0,y^1,u^1,\cdots,y^{n-1},u^{n-1}\bigg) \label{kernelDefn}
\end{eqnarray}


Let $L_A(\mu)$ be the set of strategic measures induced by deterministic policies and let $L_R(\mu)$ be the set of strategic measures induced by randomized policies. We note as earlier that such an individual randomized policy can be represented in a functional form: By Lemma 1.2 in Gikhman and Shorodhod \cite{gihman2012controlled} and Theorem 1 in Feinberg \cite{Feinberg1}, for any stochastic kernel $\Pi^k$ from $\mathbb{Y}^k$ to $\mathbb{U}^k$, there exists a measurable function $\gamma^k: [0,1] \times \mathbb{Y}^k \to \mathbb{U}^k$ such that
\begin{eqnarray}\label{GSLemma}
m\{r: \gamma^k(r,y^k) \in A\} = \Pi^k(u^k \in A | y^k),
\end{eqnarray}
and $m$ is the uniform distribution (Lebesgue measure) on $[0,1]$. 

\begin{theorem}\label{StrategicCharacterization}\cite[Theorem 2.2]{YukselSaldiSICON17}
\begin{itemize}
\item[(i)] A probability measure $P \in {\cal P}\bigg(\Omega_0 \times \prod_{k=1}^N (\mathbb{Y}^k \times \mathbb{U}^k)\bigg)$ is a strategic measure induced by a deterministic policy (that is in $L_A(\mu)$) if and only if for every $n \in \{1,\cdots,N\}$:
\[\int P(dh_{n-1},dy^n) g(h_{n-1},y^{n}) = \int P(dh_{n-1}) \bigg(\int_{\mathbb{Y}^n} g(h_{n-1},z) p_n(dz | h_{n-1}) \bigg)\]
and
\[\int P(dh_n) g(h_{n-1},y^n,u^{n}) = \int P(dh_{n-1},dy^n) \bigg(\int_{\mathbb{U}^n} g(h_{n-1},y^{n},a) 1_{\{\gamma^n(y^n) \in da\}} \bigg)\]
for some $\gamma^n \in \Gamma^n$, for all continuous and bounded $g$, with $P(d\omega_0) = \mu(dw_0)$. 
\item[(ii)] A probability measure $P \in {\cal P}\bigg(\Omega_0 \times \prod_{k=1}^N (\mathbb{Y}^k \times \mathbb{U}^k)\bigg)$  is a strategic measure induced by a randomized policy (that is in $L_R(\mu)$)  if and only if for every $n \in \{1,\cdots,N\}$:
\begin{eqnarray}\label{convD1}
\int P(dh_{n-1},dy^n) g(h_{n-1},y^{n}) = \int P(dh_{n-1}) \bigg(\int_{\mathbb{Y}^n} g(h_{n-1},z) p_n(dz | h_{n-1}) \bigg) \nonumber \\
\end{eqnarray}
and
\begin{eqnarray}\label{convD2}
\int P(dh_n) g(h_{n-1},y^n,u^{n}) = \int P(dh_{n-1},dy^n) \bigg(\int_{\mathbb{U}^n} g(h_{n-1},y^{n},a^n) \Pi^n(da^n | y^n) \bigg) \nonumber \\
\end{eqnarray}
for some stochastic kernel $\Pi^n$ on $\mathbb{U}^n$ given $\mathbb{Y}^n$, for all continuous and bounded $g$, with $P(d\omega_0) = \mu(dw_0)$. 
\end{itemize}
\end{theorem}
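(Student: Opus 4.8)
The plan is to prove the two "if and only if" characterizations essentially in parallel, since the randomized case is the general one and the deterministic case is recovered by specializing the kernel $\Pi^n(\cdot|y^n)$ to a Dirac mass at $\gamma^n(y^n)$. I would first establish the "only if" (necessity) direction. Fix a (randomized) policy $\underline{\gamma}$ with associated kernels $\Pi^n$, and let $P$ be the induced strategic measure on $\Omega_0 \times \prod_k(\mathbb{Y}^k \times \mathbb{U}^k)$. By the definition of the intrinsic model and the construction of $P$ via the sequential composition of $\mu(d\omega_0)$, the measurement kernels $p_n(dy^n|h_{n-1})$ (obtained from $\eta^n$ as in \eqref{kernelDefn}), and the control kernels $\Pi^n(du^n|y^n)$, the joint law factorizes as a product of these kernels along the natural filtration $\sigma(h_0) \subset \sigma(h_0, y^1) \subset \sigma(h_1) \subset \cdots$. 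The first displayed identity in part (ii) is then just the statement that the conditional law of $y^n$ given $h_{n-1}$ under $P$ equals $p_n(\cdot|h_{n-1})$, which I would verify by testing against continuous bounded $g$ and invoking the tower property; the second identity similarly says the conditional law of $u^n$ given $(h_{n-1}, y^n)$ equals $\Pi^n(\cdot|y^n)$ (note it does not depend on $h_{n-1}$, reflecting that DM $n$ uses only $y^n$). Continuous bounded test functions suffice because on standard Borel spaces they are measure-determining.

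For the "if" (sufficiency) direction, suppose $P \in {\cal P}(\Omega_0 \times \prod_k(\mathbb{Y}^k \times \mathbb{U}^k))$ satisfies \eqref{convD1} and \eqref{convD2} for every $n$, with $P(d\omega_0) = \mu(d\omega_0)$ (and, implicitly, $P$ has the correct $\mu$-marginal on $(\omega_0,{\bf y})$ — I should double-check whether this needs to be assumed separately or follows, and if the latter, derive it by induction from \eqref{convD1}). I would argue by induction on $n$ that the $h_n$-marginal of $P$ coincides with the measure built from $\mu$, the $p_k$'s, and the kernels extracted from \eqref{convD2}. The key point: \eqref{convD2} asserts the existence of a stochastic kernel $\Pi^n$ from $\mathbb{Y}^n$ to $\mathbb{U}^n$ such that the regular conditional distribution of $u^n$ given $(h_{n-1},y^n)$ under $P$ is $\Pi^n(\cdot|y^n)$ — in particular it is $\sigma(y^n)$-measurable, which is exactly the information-structure constraint that DM $n$ acts on $y^n$ alone. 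Then by \eqref{GSLemma} (the Gikhman–Skorohod / Feinberg functional representation), $\Pi^n$ can be realized as $\gamma^n(r^n,y^n)$ for a measurable $\gamma^n$ and independent uniform $r^n$; this gives an admissible randomized policy, and in the deterministic case the hypothesis forces $\Pi^n(\cdot|y^n) = \delta_{\gamma^n(y^n)}$ directly, yielding $\gamma^n \in \Gamma^n$. Composing these kernels reproduces $P$, so $P \in L_R(\mu)$ (resp. $L_A(\mu)$).

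The main obstacle is the measurable-selection/disintegration step in the sufficiency direction: extracting from the integral identity \eqref{convD2} an honest stochastic kernel $\Pi^n(du^n|y^n)$ — rather than one depending on the whole history $h_{n-1}$ — and verifying it is well-defined $P$-a.s. This is where standard Borel-ness of all spaces is essential (existence of regular conditional probabilities), and where one must check that the conditional law genuinely does not depend on the $h_{n-1}$-coordinate; the identity \eqref{convD2} is phrased precisely so that the right-hand integrand $\int_{\mathbb{U}^n} g(h_{n-1},y^n,a^n)\Pi^n(da^n|y^n)$ has this form, so the work is to show this characterizes $P$ uniquely given the $(h_{n-1},y^n)$-marginal. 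A secondary, more routine point is handling unbounded or merely non-negative cost later on, but for this theorem continuous bounded $g$ is all that is claimed, so I would not belabor it here. I would also remark that the two parts are not independent: part (i) is the special case of part (ii) in which every $\Pi^n$ is a Dirac kernel, so I would prove (ii) in full and then note the specialization, pointing out that the hypotheses of (i) literally force $1_{\{\gamma^n(y^n)\in da\}}$ in place of $\Pi^n(da^n|y^n)$.
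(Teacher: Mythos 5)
The paper offers no proof of this result --- it is quoted verbatim from \cite[Theorem 2.2]{YukselSaldiSICON17} --- so there is nothing internal to compare against; judged on its own, your argument is correct and is the standard one: necessity by unwinding the sequential composition of $\mu(d\omega_0)$, the kernels $p_n$, and the control kernels that defines a strategic measure, and sufficiency by induction on $n$ using that continuous bounded test functions determine probability measures on Polish spaces, with part (i) recovered as the Dirac-kernel specialization of part (ii). Your worry about ``extracting'' a kernel $\Pi^n$ depending only on $y^n$ is moot, since the kernel is supplied existentially by the hypothesis in \eqref{convD2}; all that remains is the inductive identification of $P$ with the measure composed from $\mu$, the $p_n$, and the $\Pi^n$ (resp.\ the maps $\gamma^n$).
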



The above will be useful for defining {\bf states} in decentralized stochastic control in the next section.

\subsubsection{Measurability properties of sets of strategic measures}
We have the following result, which will be crucial in the analysis to follow.
\begin{theorem}\label{FeinbergStrategicMeasurability3}\cite[Theorem 2.10]{YukselSaldiSICON17}
Consider a sequential (static or dynamic) team. 
\begin{itemize}
\item[(i)] The set of strategic measures $L_R(\mu)$ is Borel when viewed as a subset of the space of probability measures on $\Omega_0 \times \prod_k (\mathbb{Y}^k \times \mathbb{U}^k)$ under the topology of weak convergence.
\item[(ii)] The set of strategic measures $L_A(\mu)$ is Borel when viewed as a subset of the space of probability measures on $\Omega_0 \times \prod_k (\mathbb{Y}^k \times \mathbb{U}^k)$ under the topology of weak convergence.
\end{itemize}
\end{theorem}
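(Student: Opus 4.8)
The plan is to read the Borel property of $L_R(\mu)$ and of $L_A(\mu)$ directly off the characterization in Theorem~\ref{StrategicCharacterization}, by exhibiting each set as a countable intersection of Borel subsets of $\mathcal{P}\bigl(\Omega_0\times\prod_k(\mathbb{Y}^k\times\mathbb{U}^k)\bigr)$ under the weak topology. The conceptual key is that, although Theorem~\ref{StrategicCharacterization}(ii) describes $L_R(\mu)$ through an existential quantifier over stochastic kernels $\Pi^n$ — which at face value would only yield an analytic set — such a kernel, \emph{if it exists}, is forced (by the tower property) to agree $P$-almost surely with the regular conditional distribution of $u^n$ given $y^n$ under $P$. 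The existential can therefore be eliminated: $P\in L_R(\mu)$ if and only if (a) the $\Omega_0$-marginal of $P$ equals $\mu$; (b) for every $n\in\mathcal{N}$, the conditional law of $y^n$ given $h_{n-1}$ under $P$ coincides with the prescribed measurement kernel $p_n$; and (c) for every $n\in\mathcal{N}$, $u^n$ is conditionally independent of $h_{n-1}$ given $y^n$ under $P$. One also checks the converse: (a)+(b)+(c) for all $n$, together with the chain rule for conditional distributions, forces $P$ to factor as $\mu(d\omega_0)\prod_n p_n(dy^n\mid h_{n-1})\Pi^n(du^n\mid y^n)$, i.e.\ to be a strategic measure.

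Condition (a) defines a closed, hence Borel, set, since $P\mapsto$ (its $\Omega_0$-marginal) is weakly continuous. Condition (b), for a fixed continuous bounded $g$ on the (standard Borel) space of $(h_{n-1},y^n)$, is the identity $\int g\,dP=\int\bar g\,dP$ with $\bar g(h_{n-1}):=\int g(h_{n-1},z)\,p_n(dz\mid h_{n-1})$; the left-hand side is weakly continuous in $P$, and the right-hand side is Borel in $P$, because $\bar g$ is bounded measurable and $Q\mapsto\int\phi\,dQ$ is a Borel map on $\mathcal{P}(\cdot)$ for every bounded measurable $\phi$ (the class of such $\phi$ contains $C_b$ and is closed under bounded pointwise limits). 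Intersecting over a countable measure-determining family of $g$'s recovers (b) exactly, as a Borel set. For condition (c) I would invoke the Borel measurability of disintegration: the map sending $P$ to its $y^n$-marginal $P_{y^n}$ together with a regular conditional distribution of $(h_{n-1},u^n)$ given $y^n$ — valued in the standard Borel space of stochastic kernels — is Borel. Conditional independence of $u^n$ and $h_{n-1}$ given $y^n$ then amounts to this conditional being $P_{y^n}$-almost everywhere a product measure, which can be written as the vanishing of countably many Borel functionals of $P$ built from this disintegration (integrate, against $P_{y^n}$, the absolute discrepancy between the joint conditional and the product of its marginals, paired with bounded continuous test functions from countable determining families); each such functional is Borel in $P$ precisely because the disintegration map is. Since $\mathcal{N}$ is finite, $L_R(\mu)$ is Borel.

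For $L_A(\mu)$, Theorem~\ref{StrategicCharacterization}(i) shows that $L_A(\mu)$ equals $L_R(\mu)$ intersected with the set of those $P$ under which, for every $n\in\mathcal{N}$, the conditional law of $u^n$ given $y^n$ is a Dirac measure for $P_{y^n}$-almost every $y^n$ (i.e.\ the policy is deterministic). With the same disintegration map, this degeneracy requirement is the vanishing of the Borel functional $P\mapsto\int\bigl(\int f^2\,dP(\cdot\mid y^n)-(\int f\,dP(\cdot\mid y^n))^2\bigr)P_{y^n}(dy^n)$ for every $f$ in a countable determining family of bounded continuous functions on $\mathbb{U}^n$, hence a Borel condition; so $L_A(\mu)$ is again a countable intersection of Borel sets. (An alternative would be to realize $L_A(\mu)$ as the injective Borel image of a standard Borel parametrization of deterministic policies and apply the Lusin--Souslin theorem, but identifying policies modulo almost-sure equivalence is awkward here, since the relevant $y^n$-null sets depend on the earlier kernels $\Pi^1,\dots,\Pi^{n-1}$; the disintegration route avoids this.) I expect the genuine obstacle to be exactly the Borel measurability of the disintegration / regular-conditional-probability maps on the large (but still standard Borel) product spaces that arise, together with the care needed to ensure the countable test families are in fact measure-determining there; this is where the argument must lean on known measurable-selection results, in the spirit of the single-decision-maker strategic-measure measurability theory (cf.\ \cite{dynkin1979controlled,feinberg1996measurability}).
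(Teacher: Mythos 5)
The paper does not prove this statement---it is imported verbatim by citation to \cite[Theorem 2.10]{YukselSaldiSICON17}---so there is no internal proof to compare against, but your argument is a correct reconstruction along essentially the same lines as the cited reference: eliminate the existential over the kernels $\Pi^n$ by identifying them with the regular conditional distributions $P(du^n\mid y^n)$, recast membership in $L_R(\mu)$ as the marginal condition, the measurement-kernel conditions, and the conditional independence $u^n \leftrightarrow y^n \leftrightarrow h_{n-1}$, and realize each as a countable intersection of Borel constraints using the Borel measurability of $P\mapsto\int\phi\,dP$ for bounded Borel $\phi$ and of measurable disintegration. Your further characterization of $L_A(\mu)$ inside $L_R(\mu)$ by the $P_{y^n}$-a.s.\ degeneracy of the conditionals, tested through vanishing conditional variances over a countable point-separating family, is likewise sound, and you correctly identify the jointly Borel choice of regular conditional probabilities as the one load-bearing lemma.
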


For further properties of the sets of strategic measures, see \cite{YukselSaldiSICON17}.

\section{Controlled Markov Model Formulation for Decentralized Stochastic Control and Well-Posedness of a Dynamic Program}


\subsection{Witsenhausen's standard form \cite{WitsenStandard}}\label{WitStandSec}

Witsenhausen's standard form proceeds as follows, in the formulation of our paper.
\sy{
\begin{itemize}
\item[(i)] State: $x_1=\omega$, and for $2 \leq t \leq N$, $x_t = \{\omega, u^1,\ldots,u^{t-1} \}$.
\item[(i')] Extended State: For every Borel $B \in \Omega \times \prod_{i=1}^{t-1} \mathbb{U}^i$, \[\pi_t(B) = E_{\pi_t}[1_{ \{ (\omega, u^1,\ldots,u^{t-1}) \in B  \} } ]. \]
\item[(ii)] Control Action: $\gamma^t$ is $\sigma(y^t)$-measurable; $1 \leq t \leq N$.
\item[(iii)] (Total) Cost: $E[c(\omega,u^1,\ldots,u^N)]$.
\item[(iv)] Transition Dynamics/Kernel: 
\[x_t = g_t(x_{t-1};\gamma^{t-1}(x_{t-1})).\]
\end{itemize}}
As defined above, $\pi_t$ is ${\cal P}(\Omega \times \prod_{i=1}^{t-1} \mathbb{U}^i)$-valued. Let us endow this space of probability measures with the total variation norm. If one views the set of such probability measures as a subset of the set of all signed countably additive finite measures one could consider a duality pairing under the weak$^*$-topology \cite[Section 4.6]{DunfordSchwartz} (which would correspond to the classical weak convergence topology commonly studied in probability theory if the state and the measurement spaces were compact \cite{lue69}). However, even in the absence of such compactness conditions, with $\Psi_t$ being the set of bounded continuous functionals on $\Omega \times \prod_{i=1}^{t} \mathbb{U}^i$, $\psi_{t-1} \in \Psi_{t-1}$ defines a linear and continuous (bounded) map on the linear space of signed measures, where ${\cal P}(\Omega \times \prod_{i=1}^{t-1} \mathbb{U}^i)$ is viewed as a subset, through the relation
\[\langle \psi_{t-1}, \pi_{t} \rangle:= \int \pi_t(d\omega, du^1,\ldots,du^{t-1}) \psi_{t-1}(\omega, u^1,\ldots,u^{t-1}).\]
Here, $\psi_t$ can be viewed as a co-state variable defined by the relation
\begin{eqnarray}\label{recursionDual1}
 \langle \psi_t, [T_t(\gamma^t)]\pi_t \rangle =  \langle [T^*_t(\gamma^t)] \psi_t, \pi_t  \rangle 
 \end{eqnarray}
with
$T_t(\gamma^t): {\cal P}(\Omega \times \prod_{i=1}^{t-1} \mathbb{U}^i) \to {\cal P}(\Omega \times \prod_{i=1}^{t} \mathbb{U}^i)$ defined by
\[\pi_{t+1}(E) = [T_t(\gamma^{t})](\pi_t)(E) = \pi_{t}\bigg( \{x_{t}: g_t(x_{t};\gamma^{t}) \in E\} \bigg)\]
and $T^*_t: \Psi_t \to \Psi_{t-1}$ is adjoint to $T_t$ defined through (\ref{recursionDual1}):
\begin{eqnarray}\label{inductivePsi1}
[T^*_t(\gamma^t)]\psi_t = \psi_{t-1},
\end{eqnarray}
with the terminal condition
\begin{align}\label{termCWit}
\psi_N = c(\omega, u^1,\cdots,u^N),
\end{align}
and the (expected) cost induced by a team policy being
\[\langle \psi_N, [T_N(\gamma^N)]\pi_N \rangle.\]


\sy{Given the above, Witsenhausen states the following (necessity condition):
\begin{theorem}\label{standardDP}\cite[Thm. 1]{WitsenStandard}
If a team policy $\underline{\gamma}^*=\{\gamma^{1,*},\cdots,\gamma^{N,*}\}$ is optimal, then with $\pi^*_t$ being the associated state variables, we have that for all $1 \leq t \leq N$,
\begin{eqnarray}
J(\underline{\gamma}^*) = \min_{\gamma^t} \langle \psi^*_t, [T_t(\gamma^t)]\pi^*_{t} \rangle \label{WitOptEq}
\end{eqnarray}
where $\psi^*_t$ is obtained inductively through (\ref{inductivePsi1}) with $\gamma^t$ taken to be $\gamma^{t,*}$ and terminal condition given by (\ref{termCWit}).
\end{theorem}
\begin{proof} Write
\[\pi_t = [T_{t-1}(\gamma^{t-1})] \circ [T_{t-2}(\gamma^{t-2})] \circ \ldots [\circ T_1(\gamma^1)]\pi_0 \]
Here, $\pi_t$ does not depend on $\gamma^{t},\gamma^{t+1},\ldots,\gamma^N$. Likewise,
\[\psi_t = [T^*_{t+1}(\gamma^{t+1})] \circ [T^*_{t+2}(\gamma^{t+2})] \circ \ldots \circ [T^*_{N}(\gamma^{N})]\psi_N.\]
Thus, $\psi_t$ does not depend on $\gamma^1,\ldots,\gamma^{t}$. 
Thus, fixing all $\gamma^i, i \neq t$ to be from the set $\{\gamma^{1,*},\cdots,\gamma^{N,*}\} \setminus \{\gamma^{t,*}\}$, the optimal $\gamma^i$ has to be minimizing (\ref{WitOptEq}).
\end{proof}
}

\sy{

{\bf Dynamic programming using Witsenhausen's Standard Form.} We note that while Witsenhausen \cite[Thm. 1]{WitsenStandard} only gives a necessity condition of optimality, one could also obtain a sufficiency condition through backwards induction (by not fixing the control policies prior to any time $t$ to be optimal apriori). In particular, we can have the following reasoning: Through backwards induction as in Bellman's principle of optimality, first for $t=N$, for any $\gamma^t$, and admissible state $\pi_t$
\[ \langle \psi^*_{t}, [T_t(\gamma^{t,*})]\pi_t  \rangle =  \langle [T^*_t(\gamma^{t,*})]\psi^*_{t} , \pi_t \rangle \leq \langle \psi^*_t, [T_t(\gamma^{t})]\pi_{t} \rangle, \]
and with $\psi^*_{t-1} = [T^*_t(\gamma^{t,*})]\psi_{t}$, and dynamics $\pi_t = [T_{t-1}(\gamma^{t-1})]\pi_{t-1}$, we have that for $1 \leq t \leq N-1$, if the following holds,
\[ \langle \psi^*_{t-1}, [T_{t-1}(\gamma^{t-1,*})]\pi_{t-1}  \rangle \leq \langle \psi^*_{t-1}, [T_{t-1}(\gamma^{t-1})]\pi_{t-1}   \rangle, \]
then we have an optimal policy. By induction, the optimal cost would be equal to
\[\langle \psi^*_1, [T_1(\gamma^{1,*})]\pi_{1}  \rangle.\]
However, Witsenhausen's construction in \cite{WitsenStandard} does not address the well-posedness of such a dynamic program, that is, whether the recursions presented above are well-defined. Furthermore, the existence problem was not considered in \cite{WitsenStandard}. These are the main motivations behind the construction we present in the following section.
}

\subsection{A standard Borel controlled state and action space construction and  universal dynamic program}\label{newDPF}

We introduce a controlled Markov model based on the strategic measures formulation studied in \cite{YukselSaldiSICON17}, which will be utilized further below. This model allows for the recursions to be written explicitly and will lead to desirable measurability, and later existence, properties.

We split $\omega$ into $\omega_0$ and the rest (recall that $\omega$ represents the entire exogenous uncertainty acting on the system, including the initial state, the system and measurement noise): $\omega_0$ is the smallest variable whose sigma-field generated over $\Omega$, together with those generated by $u^1,\dots,u^N$ on their respective action spaces, $c$ is measurable on. Such an $\omega_0$ is typically explicitly given in a team problem formulation in the cost function, or can be attained by a reduction. 
\sy{
\begin{itemize}
\item[(i)] State: $x_t =  \{\omega_0,u^1,\cdots,u^{t-1},y^1,\cdots,y^{t}\}$, $1 \leq t \leq N$.
\item[(i')] Extended State: $\pi_t \in {\cal P}(\Omega_0 \times \prod_{i=1}^{t} \mathbb{Y}^i \times \prod_{i=1}^{t-1} \mathbb{U}^i)$ where, for Borel $B \in \Omega_0 \times \prod_{i=1}^{t} \mathbb{Y}^i \times \prod_{i=1}^{t-1} \mathbb{U}^i
$, \[\pi_t(B) := E_{\pi_t}[1_{ \{ (\omega_0,y^1,\cdots,y^t; u^1,\cdots,u^{t-1}) \in B  \} } ].\]
Thus, $\pi_t \in {\cal P}(\Omega_0 \times \prod_{i=1}^{t} \mathbb{Y}^i \times \prod_{i=1}^{t-1} \mathbb{U}^i)$ where the space of probability measures is endowed with the weak convergence topology.
\item[(ii)] Control Action: Given $\pi_t$, $\hat{\gamma}^t$ is a probability measure in ${\cal P}(\Omega_0 \times \prod_{k=1}^{t} \mathbb{Y}^k \times \prod_{k=1}^{t} \mathbb{U}^k)$ that satisfies the conditional independence relation: 
\[u^t \leftrightarrow y^t \leftrightarrow x_t = (\omega_0,y^{1},\cdots,y^{t}; u^1,\cdots,u^{t-1})\]
(that is, for every Borel $B \in \mathbb{U}^i$, almost surely under $\hat{\gamma}^t$, the following holds:
\[P(u^t \in B| y^t , (\omega_0,y^{1},\cdots,y^{t}; u^1,\cdots,u^{t-1})) = P(u^t \in B| y^t)\]
with the restriction
\[x_t \sim \pi_t.\]
Denote with $\Gamma^t(\pi_t)$ the set of all such probability measures. Any $\hat{\gamma}^t \in \Gamma^t(\pi_t)$ defines, for almost every realization $y^t$, a conditional probability measure on $\mathbb{U}^t$. 
When the notation does not lead to confusion, we will denote the action at time $t$ by $\gamma^t(du^t|y^t)$, which is understood to be consistent with $\hat{\gamma}^t$.
\item[(ii')] Alternative Control Action for Static Teams with Independent Measurements: Given $\pi_t$, $\hat{\gamma}^t$ is a probability measure on $\mathbb{Y}^t \times \mathbb{U}^t$ with a fixed marginal $P(dy^t)$ on $\mathbb{Y}^t$, that is $\pi^{\mathbb{Y}^t}_t(dy^t)=P(dy^t)$. Denote with $\Gamma^t(\pi_t^{\mathbb{Y}^t})$ the set of all such probability measures. As above, when the notation does not lead to confusion, we will denote the action at time $t$ by $\gamma^t(du^t|y^t)$, which is understood to be consistent with $\hat{\gamma}^t$.
\item[(iiii)] (Total) Cost: $E[c(\omega_0,u^1,\ldots,u^N)] =: \langle c, \hat{\gamma}^N(\pi_N) \rangle$, with:
\[\bigg(\hat{\gamma}^N(\pi_N) \bigg)(B):= E_{\hat{\gamma}^N(\pi_N)}[1_{ \{ (\omega_0,y^1,\cdots,y^N; u^1,\cdots,u^{t-1},u^N) \in B  \} } ],\]
for $1 \leq t \leq N$.
\item[(iv)] Transition Kernel: $T_t(\gamma^t): {\cal P}(\Omega_0 \times \prod_{i=1}^{t} \mathbb{Y}^i \times \prod_{i=1}^{t-1} \mathbb{U}^i) \to {\cal P}(\Omega_0  \times \prod_{i=1}^{t+1} \mathbb{Y}^i \times \prod_{i=1}^{t} \mathbb{U}^i)$ so that 
\[\pi_{t+1}=T_t(\gamma^{t})(\pi_t),\]
is defined by
\begin{eqnarray}\label{transKernelNew}
&&T_t(\gamma^{t})(\pi_t)(A_1,A_2,A_3)  \nonumber \\
&&= \int_{x_t \in A_1} \int_{u^t \in A_2} \int_{y^{t+1} \in A_3} P(dy^{t+1}|\omega_0,y^1,\cdots,y^t; u^1,\cdots,u^{t}) \gamma^t(du^t|y^t) \pi_t(dx_t),\nonumber
\end{eqnarray}
for Borel $A_1, A_2, A_3$. We recall again that $P(dy^{t+1}|\omega_0,y^1,\cdots,y^t; u^1,\cdots,u^{t})$ is given in (\ref{kernelDefn}), and the control actions $\gamma^{t}$ are defined above.
\end{itemize}}

\begin{remark}
\sy{With regard to items (ii)-(ii') in the controlled Markov model formulation, we caution that defining relaxed/randomized policies in decentralized stochastic control (as a way to facilitate the optimality analysis as in \cite{YukselSaldiSICON17}) is a subtle problem. One should not relax the probabilistic dependence and conditional independence properties among the decision makers too much: an illuminating example is what is known as the Clauser-Horne-Shimony-Holt game \cite{CHSH1969} (see also \cite{anantharam2007common} and \cite[Remark 1]{YukselSaldiSICON17} where a related convex class of information structures were introduced) in which the optimal cost under relaxed policies is strictly lower than the optimal cost for the original problem.} 
\end{remark}

Some further differences with Witsenhausen's standard form are as follows. 
\begin{itemize}
\item[(a)] In Witsenhausen's form, a deterministic update equation $x_t = g_t(x_{t-1};\gamma^{t-1}(x_{t-1}))$ exists, whereas in the formulation here we do not have such a measurable deterministic map, for $\omega$ is not part of the state. Furthermore, $\gamma^t$ is allowed to be randomized.
\item[(b)] Although it is not necessary for our analysis (we will apply a standard Bellman optimality argument with the aforementioned construction unlike Witsenhausen's approach), to be consistent with Witsenhausen's formulation, we can also define $\psi_t \in \Psi_t$ (as a bounded measurable functional from $\Omega_0 \times \prod_{k=1}^t \mathbb{Y}^{k+1} \times \prod_{k=1}^t \mathbb{U}^k$ to $\mathbb{R}$) as a co-state variable. First we can view the linear space of probability measures as a subset of the locally convex space \cite{rudin1991functional} of finite signed measures with the following notion of convergence: we say that $\nu_n \rightarrow \nu$ if $\int f(x) \nu_n(dx) \rightarrow \int f(x) \nu(dx)$ for every continuous and bounded function $f$. With the transition kernel given as (\ref{transKernelNew}),
we can define the recursions and the adjoint variable $\psi_t$ iteratively with
\begin{eqnarray}\label{recursionDual}
 \langle \psi_t, [T_t(\gamma^t)]\pi_t \rangle =  \langle [T^*_t(\gamma^t)]\psi_t, \pi_t  \rangle 
 \end{eqnarray}
where $T^*_t(\gamma^t): \Psi_t \to \Psi_{t-1}$ is adjoint to $T_t$ defined through (\ref{recursionDual}) (so that $[T^*_t(\gamma^t)]\psi_t = E^{\gamma^t}[\psi_t | x_t]$) -a more explicit representation will be presented further below using the properties of conditional expectation-, and
\begin{align}\label{FinalCondPsi}
\psi_N(\omega_0, y^1,\ldots,y^N, u^1,\ldots,u^N) := c(\omega_0, u^1,\ldots,u^N).
\end{align}
\item[(c)] Unlike Witsenhausen who endows the set of probability measures with total variation, we endow the set with the weak convergence topology. This we do to obtain a standard Borel space: total variation does not lead to a space of probability measures which is separable, whereas the space of probability measures on a complete, separable, metric (Polish) space endowed with the topology of weak convergence is itself a complete, separable, metric space \cite{Billingsley}. The Prokhorov metric, for example, can be used to metrize this space. 
\end{itemize}


In the proposed formulation, an explicit representation would be the following.
\begin{example}[Explicit representation as a terminal time problem] Given $\gamma^{t},\cdots,\gamma^N$, we have
\begin{eqnarray}
 &&\langle \psi_{t-1} , [T(\gamma^{t-1})]\pi_{t-1}  \rangle \nonumber \\
 &&=  \int \pi_{t-1}(d\omega_0,dy^1,\ldots,y^{t-1},du^1,\ldots,du^{t-2}) \nonumber \\
&& \quad \times E\bigg[ c\bigg(\omega_0,u^1,\ldots,u^{t-2},\gamma^{t-1}(y^{t-1}), \gamma^{t}(Y^t),\cdots,\gamma^N(Y^N)\bigg) \bigg| \omega_0,u^1,\cdots,u^{t-2},y^1,\cdots,y^{t-1}\bigg] \nonumber 
 \end{eqnarray}
Here, $\{Y^i\}$ variables are displayed with capital letters to highlight that these are random given $\omega_0$ (and $u^1,\cdots,u^{t-2},y^1,\cdots,y^{t-1}$), unlike what a corresponding expression would look like under Witsenhausen's formulation when they would be deterministic given $\omega$.
\end{example}


\sy{The following result is immediate by construction, through a standard Bellman optimality argument (see the analysis at the end of the previous section).
\begin{theorem}\label{standardDPRevised}
Consider a team policy $\underline{\gamma}^*=\{\gamma^{1,*},\cdots,\gamma^{N,*}\}$ that satisfies, for $1 \leq t \leq N$, and every admissible $\pi_t$,
\begin{eqnarray} \label{BellmanDPRevised}
J_t(\pi_t) = \inf_{\gamma^t} \langle \psi^*_t, [T_t(\gamma^t)]\pi_{t} \rangle
\end{eqnarray}
where $\psi^*_t$ is obtained inductively through (\ref{recursionDual}) with $\gamma^t$ taken to be $\gamma^{t,*}$ and final condition (\ref{FinalCondPsi}). Then, $\underline{\gamma}^*$ is optimal.
\end{theorem}
}
\subsubsection{Well-posedness of the proposed dynamic program in Theorem \ref{standardDPRevised}}

Witsenhausen does not address the well-posedness of a dynamic programming equation leading to the recursions given in (\ref{WitOptEq}), but notes that for teams with finite action and measurement spaces the equations would be well-defined. For the more general setup, as we will observe in the following, the well-posedness of the dynamic programming recursions requires us to revisit the concepts of universal measurability and analytic functions.

 
It suffices for a function to be universally measurable (and not necessarily Borel measurable) for its integration with respect to some probability measure to be well-defined: a function $f$ is $\mu$-measurable if there exists a Borel measurable function $g$ which agrees with $f$ $\mu$-a.e. A function that is $\mu$-measurable for every probability measure $\mu$ is called universally measurable. A set $B$ is universally measurable if the function $1_{\{x \in B\}}$ is universally measurable.

A measurable image of a Borel set is called an {\it analytic} set \cite[Appendix 2]{dynkin1979controlled}. We note that this is evidently equivalent to the seemingly more restrictive condition of being a {\it continuous} image of some Borel set. The image of a Borel set under a measurable function, and hence an analytic set, is universally measurable.

The integral of a universally measurable function is well-defined and is equal to the integral of a Borel measurable function which is $\mu$-almost equal to that function (with $\mu$ being the measure for integration). While applying dynamic programming, we often seek to establish the existence of measurable functions through the operation:
\[J_t(x_t) = \inf_{u \in \mathbb{U}(x_t)} \bigg( c(x_t,u) + \int J_{t+1}(x_{t+1})Q(dx_{t+1}|x_t,u) \bigg)\]
However, we need a stronger condition than universal measurability for the recursions to be well-defined: a function $f$ is called lower semi-analytic if $\{x: f(x) < c\}$ is analytic for each scalar $c \in \mathbb{R}$. Lower semi-analytic functions are universally measurable \cite{dynkin1979controlled}.

%


\begin{theorem}\cite{dynkin1979controlled}
Let $i:\mathbb{S} \to \mathbb{X}$ and $f: \mathbb{S} \to \mathbb{R}$ be Borel measurable functions. Then,
\[v(x) = \inf_{\{z: i(z) = x \}} f(z)\]
is lower semi-analytic.
\end{theorem}

Observe that (see p. 85 of \cite{dynkin1979controlled})
\[i^{-1}(\{x: v(x) < c\}) = \{z: f(z) < c\}\]
The set $\{z: f(z) < c\}$ is Borel, and thus if $i$ is also Borel, it follows that $v$ is lower semi-analytic. An application of this result leads to the following.

\begin{theorem}
Consider $G=\{(x,u): u \in \mathbb{U}(x) \subset \mathbb{U}\}$ a measurable subset of $\mathbb{X} \times \mathbb{U}$. Let $v: \mathbb{X} \times \mathbb{U} \to \mathbb{R}$ be Borel measurable. Then, the map
\[\kappa(x) := \inf_{(x,u) \in G} v(x,u),\]
is lower semi-analytic (and thus universally measurable).
\end{theorem}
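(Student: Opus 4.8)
The plan is to reduce the statement to the preceding theorem (the Dynkin--Yushkevich-type selection result) by the obvious identification, while being careful about one subtlety: the relevant ``inverse image'' here is a coordinate projection, which is in general only analytic rather than Borel, so I will invoke the \emph{definition} of lower semi-analyticity directly rather than the Borel-measurability hypothesis literally. Throughout, $f$ will denote the given Borel measurable function on $\mathbb{X}\times\mathbb{U}$ (written $v(x,u)$ in the statement), $\mathbb{X}$ and $\mathbb{U}$ are taken standard Borel, and $G$ is Borel; I also adopt the convention $v(x)=+\infty$ whenever the slice $\mathbb{U}(x):=\{u:(x,u)\in G\}$ is empty.

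First I would fix an arbitrary scalar $c\in\mathbb{R}$ and identify the sub-level set. By the definition of the infimum, $v(x)<c$ holds precisely when there exists $u$ with $(x,u)\in G$ and $f(x,u)<c$; equivalently,
\[
\{x:v(x)<c\}=\mathrm{proj}_{\mathbb{X}}\big(G\cap\{(x,u):f(x,u)<c\}\big),
\]
where $\mathrm{proj}_{\mathbb{X}}:\mathbb{X}\times\mathbb{U}\to\mathbb{X}$ is the coordinate projection. (The empty-slice case causes no trouble: such $x$ lie neither in $\{v<c\}$ nor in the projection.) This is exactly the identity $\{x:v(x)<c\}=i^{-1}(\{z:f(z)<c\})$ used in the proof of the previous theorem, with $\mathbb{S}=\mathbb{X}\times\mathbb{U}$, $i(x)=(\{x\}\times\mathbb{U})\cap G$, and $i^{-1}(A)=\mathrm{proj}_{\mathbb{X}}(A\cap G)$.

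Next I would observe that $\{(x,u):f(x,u)<c\}$ is Borel because $f$ is Borel measurable, and $G$ is Borel by hypothesis, so their intersection is Borel. Since $\mathrm{proj}_{\mathbb{X}}$ is measurable, the image of this Borel set under it is, by the definition of an analytic set recalled in the excerpt (a measurable image of a Borel set), an analytic subset of $\mathbb{X}$. As $c$ was arbitrary, every sub-level set $\{x:v(x)<c\}$ is analytic, which is precisely the definition of $v$ being lower semi-analytic; the asserted universal measurability then follows from the cited fact that lower semi-analytic functions are universally measurable.

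The only real point to watch — and the reason one cannot quote the previous theorem verbatim — is that $i^{-1}$ here is a projection, and projections of Borel sets need not be Borel, only analytic. Hence the argument must exploit that analyticity of all sub-level sets is \emph{exactly} what lower semi-analyticity requires, rather than attempting to verify the stronger ``$i^{-1}$ Borel'' hypothesis. A secondary, routine matter is to be explicit about the extended-real-valued convention on empty slices and about the standing assumption that $\mathbb{X}$ and $\mathbb{U}$ are standard Borel, so that the notion of analytic set behaves as intended; neither introduces any genuine difficulty.
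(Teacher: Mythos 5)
Your proof is correct and follows essentially the same route as the paper: identify $\{x: v(x)<c\}$ as the projection onto $\mathbb{X}$ of the Borel set $G\cap\{f<c\}$, note that this projection is analytic (though possibly not Borel), and conclude lower semi-analyticity directly from the definition. Your explicit remark that one cannot verify the ``$i^{-1}$ Borel'' hypothesis of the preceding theorem and must instead use analyticity of the sub-level sets is exactly the point the paper's own proof makes.
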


The proof follows from the fact that $i^{-1}(\{x: \kappa(x) < c\}) = \{(x,z): v(x,z) < c\}$, where $i$ is the projection of $G$ onto $\mathbb{X}$, which is a continuous operation; the image may not be measurable but as a measurable mapping of a Borel set, it is analytic and thus the function $v$ is lower semi-analytic and thus universally measurable. The feasibility of dynamic programming follows: The next result is due to \cite[Theorems 7.47 and 7.48]{BertsekasShreve}.
\begin{theorem}\label{RecMeasLS}
Let $E_1, E_2$ be Borel sets, and let for every $x \in E_1$, $E_2(x)$ be a subset of $E_2$ such that the set $\{(x,u): x \in E_1, u \in E_2(x)\}$ is a Borel set in $E_1 \times E_2$. 
\begin{itemize}
\item[(i)] Let $g: E_1 \times E_2 \to \mathbb{R}$ be lower semi-analytic. Then, 
\[h(e_1) = \inf_{e_2 \in E_2(e_1)} g(e_1,e_2)\]
is lower semi-analytic.
\item[(ii)] Let $g: E_1 \times E_2 \to \mathbb{R}$ be lower semi-analytic. Let $Q(de_2 | e_1)$ be a stochastic kernel. Then, \[f(e_1): = \int g(e_1,e_2) Q(de_2|e_1)\]
is lower semi-analytic.
\end{itemize}
\end{theorem}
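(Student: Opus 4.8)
Since the two claims are the partial-minimization and the kernel-integration stability properties of lower semi-analytic functions, and coincide with \cite[Propositions 7.47 and 7.48]{BertsekasShreve}, the plan is to derive (i) from the projection argument already used above and to reduce (ii) to one genuinely hard descriptive-set-theoretic input. For (i), I would verify directly that every strict sublevel set of $h$ is analytic: writing $G=\{(e_1,e_2): e_1\in E_1,\ e_2\in E_2(e_1)\}$, for each $c\in\mathbb{R}$ one has
\[
\{e_1: h(e_1)<c\}\;=\;\mathrm{proj}_{E_1}\!\big(G\cap\{(e_1,e_2): g(e_1,e_2)<c\}\big),
\]
since $h(e_1)<c$ holds iff some $e_2$ satisfies simultaneously $(e_1,e_2)\in G$ and $g(e_1,e_2)<c$ (and if $E_2(e_1)=\emptyset$ then $e_1$ lies in neither side). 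Here $\{g<c\}$ is analytic because $g$ is lower semi-analytic, $G$ is Borel hence analytic, the intersection of two analytic sets is analytic, and the projection of an analytic subset of a product of Borel spaces onto one coordinate is analytic; hence $\{h<c\}$ is analytic for every $c$, i.e.\ $h$ is lower semi-analytic. This is exactly the projection argument used above, now carried out with a state-dependent feasible set $E_2(\cdot)$ and with $g$ only lower semi-analytic rather than Borel.

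For (ii), I would first split $g=g^{+}-g^{-}$ into its nonnegative and nonpositive parts, which are lower and upper semi-analytic respectively, so that it suffices to treat each separately. For $g^{+}\ge 0$ I would use the layer-cake identity $\int g^{+}(e_1,e_2)\,Q(de_2|e_1)=\int_{0}^{\infty}Q(\{e_2: g^{+}(e_1,e_2)>t\}\,|\,e_1)\,dt$, noting that each upper level set $\{g^{+}>t\}$ is co-analytic. The key fact to invoke is that for an analytic set $A\subseteq E_1\times E_2$ and a Borel stochastic kernel $Q$, the map $e_1\mapsto Q(A_{e_1}|e_1)$ is upper semi-analytic; dually, for a co-analytic $C$ the map $e_1\mapsto Q(C_{e_1}|e_1)=1-Q(C^{c}_{e_1}|e_1)$ is lower semi-analytic. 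Plugging this into the layer-cake formula and integrating over $t$ against Lebesgue measure (a fixed Borel measure, against which joint lower semi-analyticity is preserved) shows that $\int g^{+}\,dQ$ is lower semi-analytic; the dual argument shows $\int g^{-}\,dQ$ is upper semi-analytic, and the difference is then lower semi-analytic, proving (ii).

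The step I expect to be the main obstacle is the key fact just quoted: the measurability of $e_1\mapsto Q(A_{e_1}|e_1)$ for analytic --- not merely Borel --- $A$. For Borel $A$ this map is Borel by a routine Dynkin-class argument, but for analytic $A$ one must write $A$ as the projection of a Borel set $B\subseteq E_1\times E_2\times Z$, which turns $Q(A_{e_1}|e_1)$ into a supremum over an increasing family of Borel-measurable quantities, and the (upper semi-analytic) dependence of that supremum on $e_1$ rests on the Choquet capacitability theorem for analytic sets. I would not reproduce that argument but cite it from \cite[Ch.\ 7]{BertsekasShreve} (or \cite{dynkin1979controlled}); everything else --- the $g^{\pm}$ splitting, the layer-cake reduction, and closure of the analytic sets under the countable Boolean operations and projections invoked --- is routine bookkeeping, so the real content is concentrated entirely in that capacitability step.
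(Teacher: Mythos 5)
Your proposal is correct, but it is worth noting that the paper does not actually prove this theorem: it is stated as a direct import of \cite[Propositions 7.47 and 7.48]{BertsekasShreve}, so you are supplying an argument where the paper supplies only a citation. Your part (i) is exactly the projection argument the paper does spell out for the two neighboring results (the Dynkin--Yushkevich lemma and the statement about $G=\{(x,u):u\in\mathbb{U}(x)\}$), now correctly adapted to a lower semi-analytic $g$ and a state-dependent constraint set, including the empty-section case; this matches the standard proof of Proposition 7.47. Your part (ii) takes a layer-cake route rather than the approximation-by-simple-lower-semi-analytic-functions route in Bertsekas--Shreve, but both reduce to the same nontrivial kernel-of-an-analytic-set measurability fact, which you correctly identify as the locus of all the descriptive-set-theoretic content and defer to the reference (reasonable, since reproducing the Suslin-scheme/capacitability argument would just be transcription). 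Two small caveats you should flag if you write this up: the decomposition $\int g\,dQ=\int g^{+}dQ-\int g^{-}dQ$ and the closure of lower semi-analytic functions under addition require the integral to be well defined (no $\infty-\infty$), which is why \cite{BertsekasShreve} assumes $g$ bounded above or below; and the final integration over $t$ against Lebesgue measure is itself an instance of the kernel-integration statement (with a constant kernel), so it is covered by the same cited fact rather than being strictly ``routine bookkeeping.'' Neither affects the validity of the argument.
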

 


We now return to the question of the well-posedness of the dynamic programming formulation introduced in the previous subsection.

\begin{theorem}
The dynamic programming equations introduced in Theorem \ref{standardDPRevised} are well-defined.
\end{theorem}

\proof
%
{\bf Step (i).} We utilize the formulation introduced in Section \ref{newDPF} with the standard Borel action set $\hat{\gamma}^t$ and apply backwards induction. Let \[\pi_{N}(\cdots)= P(d\omega_0, dy^1,\cdots,dy^{N},du^1,\cdots,du^{N-1}).\] Recall that at time $N$, we minimize over $\hat{\gamma}^N$ which is the set of all probability measures which satisfies
\[u^N \leftrightarrow y^N \leftrightarrow (\omega_0,y^{1},\cdots,y^{N-1}; u^1,\cdots,u^{N-1})\]

%
We observe that the graph
\[\{ \pi_t, \Gamma^t(\pi_t)\},\] is measurable, that is $\{(\pi,\gamma^t(\pi))\}$, which is the set of all strategic measures that can be applied for every state and policy pairs is a measurable subset. Note that $\{(\pi_N,\gamma^N(\pi_N))\}$ is in fact $L_R(\mu)$, this is a Borel set by Theorem \ref{FeinbergStrategicMeasurability3}, and for each $1 \leq t \leq N$, we will have the restriction of this measurable set to $(\Omega_0,\prod_{k=1}^t \mathbb{Y}^k, \prod_{m=1}^{t-1} \mathbb{U}^m)$.

Theorem \ref{RecMeasLS}(i) thus ensures that
\[J_N^*(\pi_N) = \inf_{\gamma^N} \langle \psi^*_N, T_N(\gamma^{N})\pi_{N} \rangle\]
is a lower semi-analytic function

{\bf Step (ii).} For, $\inf_{\gamma^{N-1}} \langle \psi^*_{N-1}, T_{N-1}(\gamma^{N-1})\pi_{N-1} \rangle$, with
\begin{eqnarray}
\psi^*_{N-1}(x_{N-1}) &=& E\bigg[ c(\omega,u^1,\ldots,\gamma^{t-1}(y^{t-1}), \gamma^{N-1}(y^{N-1}),\gamma^{N,*}(Y^N)) \nonumber \\
&& \qquad \qquad \qquad \qquad \qquad \qquad \bigg| \omega_0,y^1,\ldots,y^N,u^1,\ldots,u^{N-1}\bigg] 
\end{eqnarray}
we also obtain a lower semi-analytic function. This follows because of the inductive nature of dynamic programming: Theorem \ref{RecMeasLS}(ii) ensures that $E[J(\pi_N) | \pi_{N-1},\gamma^{N-1}]$ is lower semi-analytic. Note that $P(d\pi_N | \pi_{N-1}, \gamma^{N-1})$ is a well-defined stochastic kernel. In particular, through the sequence of recursions
\begin{eqnarray}\label{seqDenk}
&& \inf_{\gamma^1} E^{\gamma^1}\bigg[  \inf_{\gamma^2} E^{\gamma^2} \bigg[  \nonumber \\
&&  \quad \quad  \cdots \inf_{\gamma^N} E^{\gamma^N} \bigg[ c(\omega_0,u^1,\cdots, u^N) \bigg]  \nonumber \\
&& \quad \quad \quad \quad \quad \quad \quad \quad \quad \quad \quad \quad \quad \quad \quad \quad \quad \quad \cdots  \bigg| y^2  \bigg| y^1 \bigg]
\end{eqnarray}
At each iteration, we apply first Theorem \ref{RecMeasLS} (i) and then (ii) to ensure lower semi-analyticity.

\endproof

\begin{remark}
The infima in (\ref{seqDenk}) can be replaced to be over deterministic $\gamma^n$ instead of $\hat{\gamma}^n$ due to \cite[Theorems 2.3 and 2.5]{YukselSaldiSICON17} and \cite[p. 1691]{gupta2014existence}.
\end{remark}

\section{Existence of Optimal Policies through the Dynamic Programming Formulation}

Given the dynamic programming formulation, the discussion above leads to the question on whether the dynamic programming approach can lead to new existence or regularity properties on optimal policies.

\subsection{A counterexample on non-existence under weak continuity of kernels and its analysis}\label{counterExampleSection}

In the following, we revisit an example presented in \cite[Chapter 3]{YukselBasarBook} on the non-existence of an optimal policy. The example from \cite{YukselBasarBook} has the same cost function and information structure as Witsenhausen's original counterexample, but with only discrete distributions. The linear dynamics are given as follows and the flow of dynamics and information are presented in Figures \ref{chap3figWitsenOrig} and \ref{chap3figWitsen}. 

\begin{figure}[h]
\centering
\Ebox{.50}{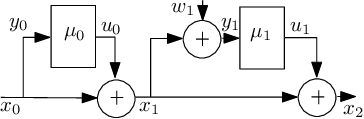}
\caption{Witsenhausen's counterexample.}\label{chap3figWitsenOrig}
\end{figure}

\[y_0=x_0,  \quad \quad u_0=\mu_0(y_0), \quad \quad x_1=x_0 + u_0,\]
\[ y_1=x_1 + w_1, \quad \quad u_1=\mu_1(y_1), \quad \quad x_2=x_1 + u_1.\]

The goal is to minimize the expected performance index for some $k>0$
\[Q_W(x_0,x_1, x_2, u_0, u_1) = k (u_0)^2 + x_2^2\]

Suppose $x_0$ and $w_1$ are two independent, zero-mean Gaussian random variables
with variance $\sigma^2$ and $1$. An equivalent representation is:
\[ u_0 = \gamma_0(x), \quad \quad u_1 = \gamma_1(u_0 + w) \]
\begin{equation}\label{eq: Wit-Q}
Q_W(x_0,x_1, x_2, u_0, u_1) = k (u_0 - x)^2 + (u_1 - u_0)^2
\end{equation}
This is the celebrated counterexample due to Witsenhausen, where he showed that an optimal policy for such a problem is non-linear: even though the system is linear, the exogenous variables are Gaussian, and the cost criterion is quadratic: decentralization makes even such a problem very challenging. We refer the reader to \cite[Chapter 3]{YukselBasarBook} and \cite{saldiyuksellinder2017finiteTeam} for a detailed historical discussion as well as on the numerical aspects for this very challening problem. 

Now, consider the same cost, but with the assumption that $x_0=x$ and $w_1$ are $\{-1,1\}$-valued independent uniformly distributed random variables.

For this setup, consider for each $\epsilon > 0$ sufficiently small, the construction:
$${\gamma_0}(x)=x + \epsilon\mbox{ sgn}(x)
\qquad
{\gamma_1}(y)=\left\{
\begin{array}{ll}
1+\epsilon, & \mbox{if } y = 2 + \epsilon   \mbox{ or  } \epsilon \\ \\ -1-\epsilon, & \mbox{if } y = -2 - \epsilon   \mbox{ or  } -\epsilon
\end{array} \right. \,,$$
and note that the corresponding value of $J$ is $k\epsilon^2$. This shows that $J$, can be made arbitrarily close to {\em zero} by picking $\epsilon$ sufficiently small.

Thus infimum of $J$ is $0$, but this cannot be achieved because in the limit as $\epsilon \to 0$, it is no longer possible to transmit $x$ (and thus $u_0$, which inevitably depends on it to keep the first term in $J$ small) perfectly. There does not exist an optimal policy!

\begin{figure}[h]
\centering
\Ebox{.45}{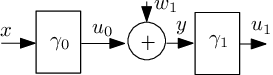}
\caption{Another representation of Witsenhausen's counterexample which makes the non-existence result for the discrete setup more evident.}\label{chap3figWitsen}
\end{figure}

\sy{The counterexample presented above can be formulated with the notation introduced in the paper}. In view of this, a formal negative result thus is the following, the proof of which is the counterexample presented above.
\begin{theorem}\label{weakNotSuff}
Consider a dynamic team. Suppose that \[E[\int f(z)\pi_{n+1}(dz)  | \pi_{n},\hat{\gamma}^n],\] is weakly continuous in $\pi_n,\hat{\gamma}^n$ for every continuous and bounded $f$. Suppose that all of the measurement and action spaces and $\Omega_0$ are compact and the cost function $c$ is continuous. Even for such a team, with only two decision makers, an optimal team policy may not exist.
\end{theorem}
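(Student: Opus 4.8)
The plan is to prove the statement by exhibiting the two–decision-maker dynamic team of Section~\ref{counterExampleSection} (the discretized version of Witsenhausen's counterexample), checking that it satisfies every hypothesis of the theorem, and then verifying that $\inf_{\underline{\gamma}}J(\underline{\gamma})=0$ while no admissible policy pair attains the value $0$. Concretely, let $x$ and $w$ be independent with $P(x=1)=P(x=-1)=\tfrac12$ and $P(w=1)=P(w=-1)=\tfrac12$, let DM$1$ observe $y^1=x$ and generate $u^1=\gamma^1(y^1)$, let DM$2$ observe $y^2=u^1+w$ and generate $u^2=\gamma^2(y^2)$, and take the cost $c(\omega_0,u^1,u^2)=k(u^1-x)^2+(u^2-u^1)^2$ with $\omega_0=x$; the team is genuinely dynamic since $y^2$ depends on $u^1$. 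I would restrict the action spaces to $\mathbb{U}^1=\mathbb{U}^2=[-2,2]$, so that all of $\mathbb{Y}^1=\{-1,1\}$, $\mathbb{Y}^2\subset[-3,3]$, $\mathbb{U}^1$, $\mathbb{U}^2$ and $\Omega_0=\{-1,1\}$ are compact, and note that $c$, being a polynomial, is continuous; the relevant measurement kernel here is $p_2(dy^2\mid\omega_0,y^1,u^1)=\mathrm{law}(u^1+w)=\tfrac12\delta_{u^1+1}+\tfrac12\delta_{u^1-1}$.

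Next I would verify the weak-continuity hypothesis. With $N=2$ the only transition is $T_1$, and the extended state $\pi_1$ is the fixed prior, namely the law of $(x,x)$; hence continuity in $\pi_1$ is vacuous and it suffices to establish continuity in $\hat{\gamma}^1$. Writing $\pi_2=T_1(\gamma^1)(\pi_1)$ as the law of $\bigl(x,x,u^1+w,u^1\bigr)$ with $(x,u^1)\sim\hat{\gamma}^1$ and $w$ independent with its fixed law, and noting that for every bounded continuous $f$ the map $(x,u^1,w)\mapsto f(x,x,u^1+w,u^1)$ is bounded and continuous, the claim reduces to the standard fact that $\hat{\gamma}^{1,k}\to\hat{\gamma}^1$ weakly implies $\hat{\gamma}^{1,k}\otimes P_w\to\hat{\gamma}^1\otimes P_w$ weakly (the product of a weakly convergent sequence with a fixed measure converges weakly), equivalently that $u^1\mapsto\mathrm{law}(u^1+w)$ is weakly continuous. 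Thus $E\bigl[\int f\,d\pi_2\mid\pi_1,\hat{\gamma}^1\bigr]=\langle f,T_1(\gamma^1)\pi_1\rangle$ is weakly continuous in $(\pi_1,\hat{\gamma}^1)$, as required.

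It then remains to establish non-existence. For the upper bound, the family $\gamma^1(x)=x+\epsilon\,\mathrm{sgn}(x)$, $\gamma^2(y)=\mathrm{sgn}(y)(1+\epsilon)$, which is admissible for $\epsilon\le1$ since its values lie in $[-2,2]$, yields $u^1=u^2$ and $(u^1-x)^2=\epsilon^2$, so $J=k\epsilon^2\to0$ and therefore $\inf J=0$. For non-attainment, suppose $J(\gamma^1,\gamma^2)=0$. Since both summands of $c$ are nonnegative and $k>0$, the first forces $u^1=x$ a.s., i.e.\ $\gamma^1(x)=x$; then $y^2=x+w$ equals $0$ with probability $\tfrac12$, and conditioned on $\{y^2=0\}$ one has $x=1$ and $x=-1$ each with probability $\tfrac12$, so the second summand would require $\gamma^2(0)=u^1=x$ to equal both $1$ and $-1$, which is impossible. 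Hence $J(\gamma^1,\gamma^2)>0$ for every admissible pair, the infimum $0$ is not attained, and no optimal team policy exists.

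I expect the main obstacle to be the verification in the second paragraph, namely that the extended-state transition of the new formulation is genuinely weakly continuous, since this requires tracking how joint weak convergence of the measure-valued state and the measure-valued action propagates through the deterministic update $T_1$ and isolating the role of the convolution (Feller) structure of $\mathrm{law}(u^1+w)$; the truncation of the action spaces to compact sets is a minor point, because the near-optimal sequence already lives in a fixed compact set so the infimum stays $0$, and the non-attainment argument is precisely the one sketched in Section~\ref{counterExampleSection}.
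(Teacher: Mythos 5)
Your proposal is correct and is essentially the paper's own proof: the paper proves Theorem~\ref{weakNotSuff} by pointing to exactly this discrete two-point version of Witsenhausen's counterexample ($x,w$ uniform on $\{-1,1\}$, cost $k(u^1-x)^2+(u^2-u^1)^2$), with the same $\epsilon$-family showing $\inf J=0$ and the same ``$u^1=x$ forces $\gamma^2(0)$ to take two values'' argument for non-attainment. Your additional explicit verifications --- truncating the action spaces to $[-2,2]$ to secure compactness, and checking weak continuity of the transition via the Feller/convolution structure of $\mathrm{law}(u^1+w)$ --- are details the paper leaves implicit, and they are carried out correctly.
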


We now go through the arguments to see why continuity/compactness conditions may not hold under such weak continuity conditions presented in Theorem \ref{weakNotSuff}.

Let $\pi_{N}(\cdots)= P(d\omega_0, dy^1,\cdots,dy^{N},du^1,\cdots,du^{N-1})$. Let $\hat{\gamma}^N$ be the joint measure defined by the control action $\gamma^N$. Note that the action space depends on the state. In particular, through an application of Theorem \ref{RecMeasLS}(i), by writing $e_1 = \pi_{N}$ and $e_2=\gamma^N(\pi_{N})$,  given that $E_2$ is Borel,
\[h(e_1) = \inf_{e_2 \in E_2} g(e_1,e_2)\]
will be lower semi-analytic. With some analysis one can show that the action space is compact. 


Through the compactness condition, we know that for every $\pi_N$ there exists an optimal $\gamma^N$. A question is whether $\gamma^{N}$ can be taken to be measurable in $\pi_{N}$. \sy{This holds because: $\Gamma^N(\pi_{N})$ is compact and its graph is measurable, as a consequence of the following result}:

%

\begin{lemma}\label{measSSchal} \cite[Theorem 2]{himmelberg1976optimal}, \cite{Schal} \cite{kuratowski1965general}
Let $\mathbb{X}, \mathbb{U}$ Polish spaces and $\Gamma= (x, \psi(x))$ where $\psi(x) \subset \mathbb{U}$ be such that, $\psi(x)$ is compact for each $x \in \mathbb{X}$ and $\Gamma$ is a Borel measurable set in $\mathbb{X} \times \mathbb{U}$. Let $c(x,u)$ be a continuous function on $\psi(x)$ for every $x$. Then,
there exists a Borel measurable function $f: \mathbb{X} \to \mathbb{U}$ such that
\[c(x,f(x)) = \min_{u \in \psi(x)} c(x,u) \]
\end{lemma}

By this result, it follows that $\hat{\gamma}^N$ can be measurably selected given $\pi_N$. Thus, an optimal policy exists for this time stage. Furthermore, $J_{N}(\pi_{N})$ is not only lower semi-analytic, but is actually Borel measurable (\sy{this also follows by realizing that the final stage decision needs to be an optimal estimator given the specifics of the problem}). 

However, $J_{N}(\pi_{N})$ is not in general continuous and it is this condition which fails for the counterexample. 

A sufficient condition for continuity is the following upper semi-continuity condition for the set-valued map $\Gamma^N(\pi_{N})$, mapping $\pi_N$ to the set $\Gamma^N(\pi_{N})$. For some related results see \cite[Appendix D, Proposition D.5]{HernandezLermaMCP}).

\begin{lemma}\label{suffCondUpperSemiCont}
$\inf_{\hat{\gamma}^N} J_N(\pi_N,\hat{\gamma}^N)$ is continuous in $\pi_N$ if (i) $\Gamma^N(\pi_{N})$ is an upper semi-continuous set-valued map so that if $\pi_{N,n} \to \pi_N$ and $\hat{\gamma}^{N,*}_n \to \hat{\gamma}^{N,*}$ with $\hat{\gamma}^{N,*}_n \in \Gamma^N(\pi_{N,n})$ then $\hat{\gamma}^{N,*} \in \Gamma^N(\pi_{N})$ and (ii) $\cup_{\pi_{N,n} \to \pi_N} \Gamma^N(\pi_{N,n})$ is pre-compact (that is, tight) for every sequence $\pi_{N,n} \to \pi_N$.
\end{lemma}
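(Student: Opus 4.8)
The plan is to invoke a standard measurable-selection / continuity argument of Berge–Maximum-Theorem type, adapted to the weak convergence topology on spaces of probability measures. First I would establish that the map $(\pi_N,\hat\gamma^N)\mapsto J_N(\pi_N,\hat\gamma^N)=\langle\psi_N^*,T_N(\hat\gamma^N)\pi_N\rangle$ is jointly continuous: since $\psi_N^*=c$ is continuous and bounded (the measurement/action spaces and $\Omega_0$ are compact, so $c$ is bounded), and since $T_N(\hat\gamma^N)\pi_N$ is obtained by integrating against $\pi_N$ and against the kernel $\gamma^N(du^N|y^N)$ read off from $\hat\gamma^N$, weak convergence $\pi_{N,n}\to\pi_N$ together with $\hat\gamma^{N}_n\to\hat\gamma^N$ (and the fixed-marginal constraint, which is what forces the $y^N$-marginal to be carried along continuously) gives $\langle c,T_N(\hat\gamma^N_n)\pi_{N,n}\rangle\to\langle c,T_N(\hat\gamma^N)\pi_N\rangle$. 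This is essentially the observation that $\hat\gamma^N$ is itself a probability measure whose push-forward under the continuous cost map varies continuously.

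Next I would run the two halves of the Berge argument separately. For upper semicontinuity of $\pi_N\mapsto\inf_{\hat\gamma^N}J_N(\pi_N,\hat\gamma^N)$: given $\pi_{N,n}\to\pi_N$, pick a near-optimal $\hat\gamma^N$ for the limit, note that under hypothesis (i) — the upper semicontinuity of the set-valued map — and the constraint structure one can produce $\hat\gamma^N_n\in\Gamma^N(\pi_{N,n})$ with $\hat\gamma^N_n\to\hat\gamma^N$ (the constraint set is essentially ``probability measures with a prescribed $\mathbb{Y}^N$-marginal'', so a feasible approximating sequence is built by disintegrating and re-coupling the near-optimal kernel against the varying marginals), and then joint continuity of $J_N$ gives $\limsup_n\inf J_N(\pi_{N,n},\cdot)\le J_N(\pi_N,\hat\gamma^N)$, hence $\limsup_n J_N^*(\pi_{N,n})\le J_N^*(\pi_N)$ after letting the near-optimality gap go to zero. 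For lower semicontinuity: pick $\hat\gamma^N_n\in\Gamma^N(\pi_{N,n})$ with $J_N(\pi_{N,n},\hat\gamma^N_n)\le J_N^*(\pi_{N,n})+1/n$; by hypothesis (ii) the collection $\bigcup_n\Gamma^N(\pi_{N,n})$ is tight, so $\{\hat\gamma^N_n\}$ has a weakly convergent subsequence $\hat\gamma^N_{n_k}\to\hat\gamma^N$; by hypothesis (i) the limit $\hat\gamma^N$ lies in $\Gamma^N(\pi_N)$; joint continuity of $J_N$ then yields $\liminf_k J_N^*(\pi_{N,n_k})\ge J_N(\pi_N,\hat\gamma^N)\ge J_N^*(\pi_N)$. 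Combining the two gives continuity of $J_N^*$ at every $\pi_N$.

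The main obstacle I anticipate is verifying, carefully, that the constraint sets $\Gamma^N(\pi_N)$ behave well under the weak topology — specifically that one can construct the feasible approximating sequence $\hat\gamma^N_n\to\hat\gamma^N$ needed for the upper-semicontinuity half. The subtlety is that $\Gamma^N(\pi_N)$ demands both a fixed $\mathbb{Y}^N$-marginal (inherited from $\pi_N$) and the conditional-independence relation $u^N\leftrightarrow y^N\leftrightarrow(\omega_0,y^{1},\dots,y^{N};u^1,\dots,u^{N-1})$; preserving the latter while perturbing the former requires working at the level of the disintegration $\gamma^N(du^N|y^N)$ rather than the joint measure, and then recomposing with $\pi_{N,n}$. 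Once that bookkeeping is in place, everything else is a routine application of the maximum theorem together with the joint continuity established in the first step. I would also remark that hypotheses (i) and (ii) are exactly what fails in the counterexample of Theorem~\ref{weakNotSuff}: there the near-optimal kernels concentrate (the ``perfect transmission'' of $x$) and the family $\bigcup_n\Gamma^N(\pi_{N,n})$ loses tightness in the limit, which is why $J_N^*$ is only lower semi-analytic and not continuous there.
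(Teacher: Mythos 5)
The paper gives no proof of this lemma at all --- it is stated as a sufficient condition with a pointer to \cite[Appendix D, Proposition D.5]{HernandezLermaMCP} --- so your Berge-maximum-theorem route is exactly the intended one. Your lower-semicontinuity half is correct and complete: hypotheses (i) and (ii) are precisely a closed-graph-plus-compactness condition, so near-minimizers $\hat{\gamma}^N_n\in\Gamma^N(\pi_{N,n})$ admit a weakly convergent subsequence whose limit is feasible, and joint continuity of $J_N$ finishes the argument. (One caveat even here: joint weak continuity of $(\pi_N,\hat{\gamma}^N)\mapsto J_N(\pi_N,\hat{\gamma}^N)$ is not automatic from $c$ being continuous and bounded --- the coupling of $\pi_N$ and $\gamma^N(du^N|y^N)$ through $y^N$ is the whole difficulty, and the paper only obtains it in the static/independent-measurements setting via Serfozo's continuous-convergence theorem in Step (ii) of the proof of Theorem \ref{existenceControlTopology}. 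In the general setting of this lemma it should be taken as a standing hypothesis, consistent with the weak-continuity assumption of Theorem \ref{weakNotSuff}.)

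The genuine gap is in your upper-semicontinuity half. Hypotheses (i) and (ii) as stated say nothing about \emph{lower} hemicontinuity of the correspondence $\pi_N\mapsto\Gamma^N(\pi_N)$, which is what the $\limsup$ bound requires; upper semicontinuity of the set-valued map points in the wrong direction for this step. Your proposed repair --- disintegrate a near-optimal $\hat{\gamma}^N$ into its kernel $\gamma^N(du^N|y^N)$ and recouple it with $\pi_{N,n}$ --- does produce feasible elements of $\Gamma^N(\pi_{N,n})$, but it does not follow that $\pi_{N,n}\otimes\gamma^N\to\pi_N\otimes\gamma^N$ weakly: for a merely measurable kernel, composing with weakly converging marginals need not preserve weak convergence of the joint measure (this is essentially the same discontinuity that drives the counterexample of Section \ref{counterExampleSection}). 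You flag this as ``bookkeeping,'' but it is the substantive point. To close it you must either (a) add lower hemicontinuity of $\Gamma^N(\cdot)$ as an explicit hypothesis, or (b) restrict to the regime the paper actually uses the lemma in --- the static reduction with action sets $\Gamma^N(\pi_N^{\mathbb{Y}^N})$ determined by a \emph{fixed} marginal $P(dy^N)$ independent of $\pi_N$ --- where the correspondence is constant, hence trivially lower hemicontinuous, and the upper-semicontinuity half is immediate. Your closing remark correctly identifies that it is the failure of (i)/(ii) and of this continuity that underlies the counterexample.
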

\begin{remark}
We note that condition (ii) in the statement of Lemma \ref{suffCondUpperSemiCont} above is sometimes imposed in the definition of upper semi-continuity of set-valued of maps. Nonetheless, for completeness, we have made this explicit as a further condition.
\end{remark}


For a class of static teams, this condition is applicable as we study more explicitly in the following.

\subsection{Existence for static teams or static-reducible dynamic teams}
In view of the discussion above, a positive result is the following; this significantly generalizes earlier reported results on existence. This result will be further generalized in Theorem \ref{existenceRelaxed2}, but the proof for the result below will be more direct and accessible. Furthermore, Theorem \ref{existenceControlTopology} will lead to a {\it robustness} result which shows that the value function of a sequential team problem may be continuous in the priors. We anticipate this result to have significant applications in decentralized stochastic control.

Note that for such static teams we modify the control action space $\Gamma^t$ at time $t$ (which for the static case is the set of probability measures on $\mathbb{Y}^t \times \mathbb{U}^t$ with a fixed marginal $P(dy^t)$ on $\mathbb{Y}^t$) as discussed in Section \ref{newDPF}.


\begin{theorem}\label{existenceControlTopology}
Consider a static or a dynamic team that admits a reduced form with independent measurements and $c_s$ (see (\ref{c_sDefn})) continuous. Suppose that all of the action spaces are compact and $c_s$ is continuous and bounded. Then, an optimal team policy exists. 
\end{theorem}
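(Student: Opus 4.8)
The plan is to reduce the dynamic team to its static form (using the independent-measurements/static reduction of Section~\ref{EquivIS}, which is available by hypothesis) and then prove the result for static teams with independent measurements directly via the strategic-measure formulation, working backwards through the dynamic program from Section~\ref{newDPF}. After the static reduction, the relevant cost is $c_s$ of \eqref{c_sDefn}, which is continuous and bounded, and the measurements ${\bf y}$ are mutually independent and independent of $\omega_0$; in particular the marginal of $\pi_t$ on the measurement coordinates is a fixed product measure that does not depend on the policy. This is the key structural simplification: under the alternative action formulation (ii') the control action $\hat\gamma^t$ at stage $t$ ranges over the set $\Gamma^t(\pi_t^{\mathbb{Y}^t})$ of probability measures on $\mathbb{Y}^t\times\mathbb{U}^t$ with the \emph{fixed} marginal $P(dy^t)$ on $\mathbb{Y}^t$, and since $\mathbb{U}^t$ is compact, each such $\Gamma^t(\pi_t^{\mathbb{Y}^t})$ is itself weakly compact (it is a closed subset of ${\cal P}(\mathbb{Y}^t\times\mathbb{U}^t)$, and tightness is automatic because the marginal on $\mathbb{Y}^t$ is fixed and $\mathbb{U}^t$ is compact).

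The core of the argument is a backwards induction on $t=N,N-1,\dots,1$ establishing that the value function $J_t(\pi_t)$ obtained from the recursion in Theorem~\ref{standardDPRevised} is continuous and bounded in $\pi_t$ (with respect to weak convergence), and that an optimal $\hat\gamma^{t,*}$ exists at each stage. For the base case $t=N$: $\langle \psi^*_N, T_N(\hat\gamma^N)\pi_N\rangle = \int c_s\, d(\hat\gamma^N(\pi_N))$ is jointly continuous in $(\pi_N,\hat\gamma^N)$ because $c_s$ is bounded continuous and the map $(\pi_N,\hat\gamma^N)\mapsto \hat\gamma^N(\pi_N)$ is weakly continuous (gluing a fixed conditional kernel consistent with $\hat\gamma^N$ onto $\pi_N$). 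Since $\hat\gamma^N$ ranges over the compact set $\Gamma^N(\pi_N^{\mathbb{Y}^N})$ and, crucially, the correspondence $\pi_N\mapsto\Gamma^N(\pi_N^{\mathbb{Y}^N})$ is continuous (indeed the constraint set only depends on the \emph{fixed} $\mathbb{Y}^N$-marginal, so it is essentially constant along any sequence $\pi_{N,n}\to\pi_N$ — this is precisely the upper-semicontinuity-plus-tightness hypothesis of the Lemma preceding this theorem), Berge's maximum theorem gives that $J_N(\pi_N)=\inf_{\hat\gamma^N}\langle\psi^*_N,T_N(\hat\gamma^N)\pi_N\rangle$ is continuous, the infimum is attained, and a measurable selector $\hat\gamma^{N,*}(\pi_N)$ exists (using the measurable-selection lemmas quoted in Section~\ref{counterExampleSection}). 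For the inductive step, given that $J_{t+1}$ is continuous and bounded, one writes $J_t(\pi_t)=\inf_{\hat\gamma^t\in\Gamma^t(\pi_t^{\mathbb{Y}^t})}\int J_{t+1}(\pi_{t+1})\,P(d\pi_{t+1}\mid\pi_t,\hat\gamma^t)$; weak continuity of the stochastic kernel $(\pi_t,\hat\gamma^t)\mapsto P(d\pi_{t+1}\mid\pi_t,\hat\gamma^t)$ (again because the ${\bf y}$-coordinates contribute a fixed product kernel and $\hat\gamma^t$ acts continuously) makes the integrand jointly continuous and bounded, and Berge's theorem applies exactly as in the base case. Concatenating the selectors $\hat\gamma^{1,*},\dots,\hat\gamma^{N,*}$ yields an admissible team policy achieving $J_1(\pi_1)=J^*$, hence optimal.

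The main obstacle is verifying the two continuity properties that feed Berge's theorem: (a) joint weak continuity of the ``glue'' map $(\pi_t,\hat\gamma^t)\mapsto\pi_{t+1}=T_t(\hat\gamma^t)\pi_t$, and (b) continuity of the constraint correspondence $\pi_t\mapsto\Gamma^t(\pi_t^{\mathbb{Y}^t})$. Both hinge entirely on the static-with-independent-measurements structure: because the measurement kernel $P(dy^{t+1}\mid\cdot)=Q_{t+1}(dy^{t+1})$ is a \emph{fixed} measure not depending on the past (the dependence having been absorbed into $c_s$), the transition $T_t$ is just coordinate augmentation by a fixed probability measure composed with the action $\hat\gamma^t$, which is weakly continuous; and the constraint set $\Gamma^t(\pi_t^{\mathbb{Y}^t})$ depends on $\pi_t$ only through its $\mathbb{Y}^t$-marginal, which is the constant $Q_t$, so the correspondence is trivially continuous and uniformly tight. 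I would also need to check that replacing the infimum over the ``extended'' actions $\hat\gamma^t$ by the infimum over ordinary (possibly randomized) policies $\gamma^t$ does not change the value — this is exactly the point invoked at the end of the previous proof via \cite[Theorems 2.3 and 2.5]{YukselSaldiSICON17} and \cite[p.~1691]{gupta2014existence}. Everything else (boundedness propagation, measurability of selectors) is routine given the lemmas already recorded in the excerpt.
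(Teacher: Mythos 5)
Your proposal is correct and follows essentially the same route as the paper's proof: a static reduction, the fixed-marginal (hence state-independent and weakly compact) action sets of formulation (ii'), and a backwards induction establishing joint continuity of the stage cost and hence continuity of the value function, with measurable selection at each stage. The only cosmetic difference is that you package the continuity-of-the-infimum step as Berge's maximum theorem, whereas the paper proves the required joint continuity directly via the continuously-converging-functions theorem of Serfozo/Langen and then uses compactness of the (constant) constraint set; the ingredients are identical.
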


\proof {\bf Step (i).}
We represent the control action spaces with space of all joint measures on $\mathbb{Y}^i \times \mathbb{U}^i$ with a fixed marginal on $y^i$. Since the team is static, this decouples the action spaces from the actions of the previous decision makers. This also allows for the analysis studied earlier to be applicable and the value function at each time stage can be made to be continuous under weak convergence. 

{\bf Step (ii).} With the new formulation, we show that the optimal cost function $J_n(\pi_n)$ is continuous for every $1 \leq n \leq N$. This follows through an inductive argument as follows.
For the last stage $N$, this follows by the continuity of the cost function in $J_{N}(\pi_N,\gamma^N)$, as we establish below.

The key step in the proof is to show that we have the desired continuity under the new state and action space construction. In particular, let $\pi_{N-1,m} \to \pi_{N-1}$ and $(Q\gamma)^N_m \to (Q\gamma)^N$ (where convergence is in the weak sense). With an abuse of notation, suppose that $x_{N-1}$ also contains $u^{N-1}$ for the analysis below in {\bf Step (ii)}. Then, for every continuous and bounded $f$, with
\begin{eqnarray}
&& \int \pi_{N-1,m}(dx_{N-1}) Q(dy^N) \gamma^N_m(du^N|y^N) f(x_{N-1},y^N,u^N) = \int \pi_{N-1,m}(dx_{N-1}) h^m_{N-1}(x_{N-1}) \nonumber 
\end{eqnarray}

with
\[h^m_{N-1}(x) := \int \bigg( Q(dy^N) \gamma^N_m(du^N|y^N) f(x,y^N,u^N) \bigg),\]
and
\[h_{N-1}(x) := \int \bigg( Q(dy^N) \gamma^N(du^N|y^N) f(x,y^N,u^N) \bigg),\]
we show that $h^m_{N-1}(x_m)$ is a continuously converging function (as it is defined in \cite{serfozo1982convergence}) in the sense that for every $x_m \to x$,
\[h^m_{N-1}(x_m) \to h_{N-1}(x)\]
This follows from the following. First write 
\[h^m_{N-1}(x_m) - h_{N-1}(x) =  \bigg(h^m_{N-1}(x_m) - h_{N-1}(x_m)\bigg) + \bigg(h_{N-1}(x_m) - h_{N-1}(x)\bigg)\]
The second term converges to zero by the dominated convergence theorem. The first term converges to zero since
\begin{eqnarray}
&& \bigg(h^m_{N-1}(x_m) - h_{N-1}(x_m)\bigg) \nonumber \\
&& =  \int \bigg( Q(dy^N) \gamma^N_m(du^N|y^N) -  Q(dy^N) \gamma^N(du^N|y^N) \bigg) f(x_m,y^N,u^N) \nonumber
\end{eqnarray}

Here, $(Q\gamma)^N_m \to (Q\gamma^N)$ weakly and $f(x_m,y^N,u^N) \to f(x,y^N,u^N)$ continuously that is, as a mapping from $\mathbb{Y}^N \times \mathbb{U}^N \to \mathbb{R}$, $f_m(y^N,u^N):=f(x_m,y^N,u^N) \to f(y^N,u^N):=f(x,y^N,u^N)$ as $m \to \infty$. Thus, the convergence holds by a generalized convergence theorem given in \cite[Theorem 3.5]{serfozo1982convergence} or \cite[Theorem 3.5]{Lan81}.

Thus, continuity of $J_N(\pi_N, \gamma^N)$ in $(\pi_N, \gamma^N)$ holds.

{\bf Step (iii).} Now that we have established the continuity of $J_N(\pi_N, \gamma^N)$, it follows that by the compactness of $\Gamma^N$ (which is independent of the {\it state} of the system), 
\[J_N(\pi_N):=\inf_{\gamma^N \in \Gamma^N} J_N(\pi_N, \gamma^N)\]
is continuous in $\pi_{N}$. Furthermore, an optimal selection exists for $\gamma^N$ through the measurable selection condition given in Lemma \ref{measSSchal}.

{\bf Step  (iv).} In fact, this also implies that
\[J_{N-1}(\pi_{N-1},\gamma_{N-1}) = E[J_N(\pi_N) | \pi_{N-1},\gamma^{N-1} ], \]
is continuous in $\pi_{N-1}$ and $\gamma^{N-1}$. This follows because, $\pi_N$ is a deterministic function of $\pi_{N-1},\gamma^{N-1}$, and thus what is needed to be shown is that with $\pi_N = G(\pi_{N-1},\gamma^{N-1})$
\[\int \bigg(G(\pi_{N-1},\gamma^{N-1})\bigg)(dx_{N}) f(x_N) \]
is continuous in $\pi_{N-1},\gamma^{N-1}$. Note that 
\[\pi_N(dx_{N-1},du^{N-1},dy^N)=\pi_{N-1}(d\bar{x}_{N-1})(Q\gamma)^{N-1}(dy^{N-1},du^{N-1})Q^N(dy^N),\]
where we use the notation $\bar{x}_{N-1} = \{\omega_0,y^1,\ldots,y^{N-2},u^1,\ldots,u^{N-2}\}$ as $x_{N-1} \setminus {y^{N-1}}$. Through a similar argument as in {\bf Step (ii)}, continuity can be established.

{\bf Step (v).} By induction, $J_1(\pi_0)$ can be established. Furthermore, the optimal cost is continuous in $\pi_0$.
\endproof


We formally state the implication in {\bf Step (v)} as a theorem.

\begin{theorem}
Under the hypotheses of Theorem \ref{existenceControlTopology}, the optimal team cost is continuous in $\pi_0$, the prior measure on $(\omega_0, y^1)$.
\end{theorem}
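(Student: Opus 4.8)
The plan is to observe that this theorem is nothing more than a restatement of \textbf{Step (v)} in the proof of Theorem \ref{existenceControlTopology}, so the proof is a one-line appeal to what has already been shown, plus a careful identification of notation. Concretely, in the inductive argument establishing Theorem \ref{existenceControlTopology} we showed that $J_n(\pi_n)$ is continuous (under weak convergence) for every $1 \leq n \leq N$; the base case was the continuity of $J_N(\pi_N,\gamma^N)$ in \textbf{Step (ii)}, combined with compactness of $\Gamma^N$ in \textbf{Step (iii)}, and the inductive step passed from $J_n(\pi_n)$ to $J_{n-1}(\pi_{n-1},\gamma^{n-1}) = E[J_n(\pi_n)\mid \pi_{n-1},\gamma^{n-1}]$ via the continuous-convergence / generalized convergence theorem argument in \textbf{Step (iv)}, and then again used compactness of $\Gamma^{n-1}$ to infimize. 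Carrying the induction down to $n=1$ gives that $J_1(\pi_0)$ is continuous in $\pi_0$.

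First I would recall that, in the formulation of Section \ref{newDPF}, the initial datum of the dynamic program is $\pi_0$, the prior distribution on $(\omega_0, y^1)$: indeed $\pi_1 = \pi_0$ in the indexing used above (the state $x_1 = \{\omega_0, y^1\}$), so $J_1(\pi_0)$ is precisely the value of the whole team problem as a function of the prior. Then I would state that by Theorem \ref{standardDPRevised} together with the well-posedness established above, $J^* = J_1(\pi_0)$ equals the optimal team cost $J(\underline{\gamma}^*)$, and an optimal policy exists by the measurable selection performed stagewise (Lemma \ref{measSSchal} and the Kuratowski selection theorem). Hence the map $\pi_0 \mapsto J^*(\pi_0)$ inherits the continuity proved in \textbf{Step (v)}.

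The only substantive content to spell out is therefore that the inductive continuity argument of Theorem \ref{existenceControlTopology} does not rely on the prior being held fixed — i.e., that each $J_n$ is continuous \emph{jointly} as a function of whatever upstream measure feeds into it — so that the conclusion at $n=1$ is genuinely a statement about varying $\pi_0$. This is already built into \textbf{Steps (ii) and (iv)}, where the continuous-convergence argument is run with $x_{N-1}$ (and, more generally, the partial state) varying, and where the transition map $\pi_n = G(\pi_{n-1},\gamma^{n-1})$ is shown to be weakly continuous. I would simply remark on this and conclude.

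I do not expect any real obstacle: the theorem is a corollary packaging of \textbf{Step (v)}. If anything, the only delicate point is bookkeeping — making sure that ``$\pi_0$, the prior measure on $(\omega_0,y^1)$'' is correctly matched with the object called $\pi_1$ (or $\pi_0$) in the recursion, and that the compactness hypothesis on the action spaces in Theorem \ref{existenceControlTopology} is what legitimizes replacing $\inf_{\gamma^n}$ by a $\min$ attained at a measurably-selected policy at every stage, so that the value functions being composed are the true optimal-cost-to-go functions rather than mere infima. With that identification in place, the proof is immediate.

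\begin{proof}
This is exactly the content of \textbf{Step (v)} in the proof of Theorem \ref{existenceControlTopology}. Indeed, in the formulation of Section \ref{newDPF} the state at the first stage is $x_1 = \{\omega_0, y^1\}$, so that $\pi_1$ is the prior $\pi_0$ on $(\omega_0, y^1)$, and by Theorem \ref{standardDPRevised} (together with the well-posedness established above) the optimal team cost equals $J_1(\pi_0)$. In the proof of Theorem \ref{existenceControlTopology} it was shown, by the backward induction of \textbf{Steps (ii)--(iv)}, that $J_n(\pi_n)$ is continuous under weak convergence for every $1 \leq n \leq N$: the base case $n = N$ follows from the continuity of $J_N(\pi_N,\gamma^N)$ (established via the generalized convergence theorem of \cite{serfozo1982convergence}) and the compactness of $\Gamma^N$, while the inductive step uses that $\pi_n = G(\pi_{n-1},\gamma^{n-1})$ depends weakly continuously on $(\pi_{n-1},\gamma^{n-1})$, that $J_{n-1}(\pi_{n-1},\gamma^{n-1}) = E[J_n(\pi_n) \mid \pi_{n-1},\gamma^{n-1}]$ is therefore continuous, and that $\Gamma^{n-1}$ is compact so the infimum over $\gamma^{n-1}$ preserves continuity and is attained at a measurably selected policy (Lemma \ref{measSSchal}). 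Carrying the induction down to $n = 1$ yields that $J_1(\pi_0)$, and hence the optimal team cost, is continuous in $\pi_0$.
\end{proof}
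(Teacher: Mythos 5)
Your proposal is correct and matches the paper exactly: the paper states this theorem as a formal packaging of \textbf{Step (v)} in the proof of Theorem \ref{existenceControlTopology} and offers no additional argument beyond that backward induction. Your recapitulation of the induction and the identification of $\pi_0$ with the prior on $(\omega_0,y^1)$ is precisely the intended content.
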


\section{Existence of Optimal Policies in Decentralized Stochastic Control and Comparison with the Literature}\label{exis}

\subsection{New existence results in the most general form}
In the following, we first present the new existence results of the paper in the most general form. The first one follows from the dynamic programming method presented earlier, and the second one builds on a strategic measures approach. 


\begin{theorem}\label{existenceRelaxed2}
Consider a static or a dynamic team that admits a reduced form with independent measurements and $c_s$ (see (\ref{c_sDefn})) continuous. Suppose further that $\mathbb{U}^i$ is $\sigma-$compact (that is, $\mathbb{U}^i = \cup_n K_n$ for a countable collection of increasing compact sets $K_n$) and without any loss, the control laws can be restricted to those with $E[\phi^i(u^i)]\leq M$ for some lower semi-continuous $\phi^i: \mathbb{U}^i \to \mathbb{R}_+$ which satisfies $\lim_{n \to \infty} \inf_{u^i \notin K_n} \phi^i(u^i) = \infty$.
Then, an optimal team policy exists. 
\end{theorem}


\proof First we note that for a family of probability measures, if the marginals of these measures are tight, then this family of joint probability measures with these tight marginals is also tight (see e.g., \cite[Proof of Thm. 2.4]{yukselSICON2017}). In our theorem, the realized control action sets are not compact valued, but the control policy space considered is still weakly compact due to the properties of $\phi^i$: the marginals on $\mathbb{U}^i$ can be restricted to a weakly compact space.  

Observe that {\bf Step (ii)} of the proof of Theorem \ref{existenceControlTopology} satisfies the lower semi-continuity property using the fact that the cost is non-negative valued and through a truncation argument and letting the truncation variable go to infinity through a monotone convergence theorem (applied to the right hand side, and the truncated function replaced with the limit on the left hand side as a universal upper bound). That is, with $(\pi_N^k,\gamma^N_k) \to (\pi_N, \gamma^N)$, we have that for every $L \in \mathbb{R}_+$
\begin{eqnarray}
&& \liminf_{k \to \infty} J_N(\pi_N^k,\gamma^N_k) \nonumber \\
&& = \liminf_{k \to \infty} \int \pi_{N-1}^k(dx_{N-1}) Q(dy^N) \gamma^N_k(du^N|y^N) c(x_{N-1},y^N,u^N) \nonumber \\
&& \geq \liminf_{k \to \infty} \int \pi_{N-1}^k(dx_{N-1}) Q(dy^N) \gamma^N_k(du^N|y^N) \min(L,c(x_{N-1},y^N,u^N)) \nonumber \\
&& = \int \pi_{N-1}(dx_{N-1}) Q(dy^N) \gamma^N(du^N|y^N) \min(L,c(x_{N-1},y^N,u^N)),
\end{eqnarray}
where the last equality holds due to {\bf Step (ii)} of the proof of Theorem \ref{existenceControlTopology}. Since $L$ is arbitrary, 
\begin{eqnarray}
&& \liminf_{k \to \infty} J_N((\pi_N^k,\gamma^N_k)) \nonumber \\
&& \quad \quad \geq \int \pi_{N-1}(dx_{N-1}) Q(dy^N) \gamma^N(du^N|y^N) c(x_{N-1},y^N,u^N),
\end{eqnarray}
leading to the lower semi-continuity of $J_N(\pi_N, \gamma^N)$. 

Furthermore, steps similar to {\bf Step (iii)} of the proof of Theorem \ref{existenceControlTopology} can be adjusted to ensure that 
\[J_N(\pi_N):=\inf_{\gamma^N \in \Gamma^N} J_N(\pi_N, \gamma^N)\]
is lower semi-continuous in $\pi_{N}$. This follows from the fact that, for compact $\mathbb{U}$, the function
\[S(x) := \inf_{u \in \mathbb{U}} d(x,u),\]
is lower semi-continuous if $d(x,u)$ is. The argument follows from the following. Let $x_n \to x$ and $u^*_n$ be an optimal action at $x_n$, and $u^*$ be optimal at $x$. Then
\[\liminf_{n \to \infty} d(x_n,u^*_n) - d(x,u^*)\geq 0\]
for otherwise if
\[\liminf_{n \to \infty} d(x_n,u^*_n) - d(x,u^*) < 0\]
there exists a subsequence and $\delta > 0$ such that
\[\liminf_{n_k \to \infty} d(x_{n_k},u^*_{n_k}) - d(x,u^*) < -\delta \]
But since $d$ is lower semi-continuous, the above implies that for some further subsequence along which $u^*_{n'_k} \to v^*$ for some $v^*$
\[ d(\lim_{n'_k \to \infty} x_{n'_k}, u^*_{n'_k}) - d(x,u^*)  \leq  \liminf_{n_k \to \infty} d(x_{n'_k},u^*_{n'_k}) - d(x,u^*) < - \delta\]
But, $d(\lim_{n'_k \to \infty} x_{n'_k}, u^*_{n'_k}) = d(x,v^*)$ cannot be less than $d(x,u^*) - \delta$ since $u^*$ is optimal. Thus, at each step of the induction, the lower semi-continuity of the value function is presented. Since in the proof of Theorem \ref{existenceControlTopology}, $\pi_n \to \pi$ and $\gamma^i_n \to \gamma^i$ satisfy the identical conditions of lower semi-continuity and compactness, the result follows. 

Furthermore, an optimal selection exists for $\gamma^N$ through the measurable selection conditions \cite[Theorem 2]{himmelberg1976optimal}, \cite{Schal} \cite{kuratowski1965general}.

Since $J_N(\pi_N)$ is lower semi-continuous, {\bf Step (iv)} of the proof of Theorem \ref{existenceControlTopology} identically ensures that $J_{N-1}(\pi_{N-1},\gamma^{N-1})$ is also lower semi-continuous. 
The recursion then applies until $t=0$.  
\endproof


We next present a strategic measures based approach. In the following, we show that existence can be established even if the continuity is {\it only in the actions} (and not necessarily in the measurement variables). This applies both for static teams, or the static reduction of dynamics teams, with independent measurements. 

\begin{theorem}\label{existenceRelaxed3}
Consider a static or a dynamic team that admits a reduced form with independent measurements and $c_s$ (see (\ref{c_sDefn})) continuous only in ${\bf u}$ for every fixed $\omega_0, {\bf y}$. Suppose further that $\mathbb{U}^i$ is $\sigma-$compact (that is, $\mathbb{U}^i=\cup_n K_n$ for a countable collection of increasing compact sets $K_n$) and without any loss, the control laws can be restricted to those with $E[\phi^i(u^i)]\leq M$ for some lower semi-continuous $\phi^i: \mathbb{U}^i \to \mathbb{R}_+$ which satisfies $\lim_{n \to \infty} \inf_{u^i \notin K_n} \phi^i(u^i) = \infty$.
Then, an optimal team policy exists. 
\end{theorem}
\proof Consider the product measure: 
\[P(d\omega_0) \prod_{k=1}^N(Q^k\gamma^k)(dy^k,du^k).\]
Suppose that every action $\hat{\gamma}^k$, that is, $(Q^k\gamma^k)_m(dy^k,du^k)$, converges to $(Q^k\gamma^k)(dy^k,du^k)$ weakly. By \cite[p. 57]{Par67}, the joint product measure $P(d\omega_0) \prod_{k=1}^N(Q^k\gamma^k)_m(dy^k,du^k)$
will converge weakly to $P(d\omega_0) \prod_{k=1}^N(Q^k\gamma^k)(dy^k,du^k)$; see also \cite[Section 5]{serfozo1982convergence}.
Recall the $w$-$s$ topology introduced by Sch\"al \cite{schal1975dynamic}: The $w$-$s$ topology on the set of probability measures ${\cal P}(\mathbb{X} \times \mathbb{U})$ is the coarsest topology under which $\int f(x,u) \nu(dx,du): {\cal P}(\mathbb{X} \times \mathbb{U}) \to \mathbb{R}$ is continuous for every measurable and bounded $f$ which is continuous in $u$ for every $x$ (but unlike weak topology, $f$ does not need to be continuous in $x$). Since the marginals on $\prod_k \mathbb{Y}^k$ is fixed, \cite[Theorem 3.10]{schal1975dynamic} (or  \cite[Theorem 2.5]{balder2001}) establishes that the set of all probability measures with a fixed marginal on $\prod_k \mathbb{Y}^k$ is relatively compact under the $w$-$s$ topology. This in turn ensures that the function (by a truncation and then a limiting argument as done earlier)
\[\int P(d\omega_0) \prod_{k=1}^N(Q^k\gamma^k)(dy^k,du^k) c(\omega_0,{\bf u}),\]
is lower semi-continuous under the $w$-$s$ topology. Since the set of admissible strategic measures is sequentially compact under the $w$-$s$ topology,  existence of an optimal team policy follows. The proof for dynamic case follows analogously.
\endproof

\begin{remark}\label{detRemark}
Building on \cite[Theorems 2.3 and 2.5]{YukselSaldiSICON17} and \cite[p. 1691]{gupta2014existence} (due to Blackwell's irrelevant information theorem \cite{Blackwell2,Blackwell3}, \cite[p. 457]{YukselBasarBook}), an optimal policy, when exists, can be assumed to be {\it deterministic}.
\end{remark}

\subsection{Comparison with the previously reported existence results and a refinement}

Existence of optimal policies for static and a class of sequential dynamic teams have been studied recently in \cite{gupta2014existence,YukselSaldiSICON17}. More specific setups have been studied in \cite{WuVer11}, \cite{wit68}, \cite{YukselOptimizationofChannels} and \cite{YukselBasarBook}. Existence of optimal team policies has been established in \cite{charalambous2017centralizedI} for a class of continuous-time decentralized stochastic control problems. For a class of teams which are convex, one can reduce the search space to a smaller parametric class of policies, such as linear policies for quasi-classical linear quadratic Gaussian problems \cite{rad62,KraMar82,HoChu}.

\begin{theorem}\label{existenceT}\cite{YukselSaldiSICON17}
(i) Consider a static or dynamic team. Let the loss function $c$ be lower semi-continuous in $(\omega_0, {\bf u})$ and $L_R(\mu)$ be a compact subset under weak topology. Then, there exists an optimal team policy. This policy is deterministic and hence induces a strategic measure in $L_A(\mu)$.  \\
(ii) Consider a static team or the static reduction of a dynamic team with $c$ denoting the loss function. Let $c$ be lower semi-continuous in $\omega_0, {\bf u}$ and $L_C(\mu)$ be a compact subset under weak topology. Then, there exists an optimal team policy. This policy is deterministic and hence induces a strategic measure in $L_A(\mu)$.  \\
\end{theorem}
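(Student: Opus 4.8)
The plan is to run the direct method of the calculus of variations on the space of strategic measures. For part (i), observe first that by the definition of $L_R(\mu)$ and of the cost (\ref{lossF}) one has $J^\ast = \inf_{P \in L_R(\mu)} \int P(ds)\, c(s)$, where $c$ is regarded as a function on $\Omega_0 \times \prod_k (\mathbb{Y}^k \times \mathbb{U}^k)$ depending only on $(\omega_0, {\bf u})$; since $c$ is lower semi-continuous in $(\omega_0, {\bf u})$ it is lower semi-continuous on this product space, and it is non-negative. I would take a minimizing sequence $P_n \in L_R(\mu)$. Because the measurement and action spaces are standard Borel, the product space is Polish, so ${\cal P}(\Omega_0 \times \prod_k (\mathbb{Y}^k \times \mathbb{U}^k))$ under weak convergence is metrizable (e.g.\ by the Prokhorov metric), and the assumed weak compactness of $L_R(\mu)$ is therefore sequential compactness; one can extract $P_{n_j} \to P^\ast$ weakly with $P^\ast \in L_R(\mu)$, using that a compact subset of a metric space is closed.

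The crucial analytic step is the lower semi-continuity of the map $P \mapsto \int P(ds)\, c(s)$ under weak convergence for a cost that is merely lower semi-continuous and non-negative. I would obtain it via the truncation argument already used in the proof of Theorem \ref{existenceRelaxed2}: writing $c = \sup_m c_m$ with $c_m := \min(c,m)$ bounded and lower semi-continuous, the Portmanteau characterization for bounded lower semi-continuous functions gives $\liminf_j \int c_m\, dP_{n_j} \ge \int c_m\, dP^\ast$, and then monotone convergence in $m$ yields $\liminf_j \int c\, dP_{n_j} \ge \int c\, dP^\ast$. Hence $J^\ast = \liminf_j \int c\, dP_{n_j} \ge \int c\, dP^\ast \ge J^\ast$, so $P^\ast$ attains the infimum, and being an element of $L_R(\mu)$ it is induced by an admissible (individually randomized) team policy, which is thus optimal.

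For the deterministic assertion I would appeal to Remark \ref{detRemark}: by \cite[Theorems 2.3 and 2.5]{YukselSaldiSICON17} and \cite[p.~1691]{gupta2014existence} (Blackwell's irrelevant information theorem applied one decision maker at a time), an optimal randomized policy can be replaced, without increasing the cost, by a deterministic one, so the optimum is attained by a strategic measure in $L_A(\mu)$. Part (ii) is proved the same way with $L_C(\mu)$ in place of $L_R(\mu)$: the minimizing-sequence / compactness / lower-semicontinuity steps go through verbatim and produce an optimal $P^\ast \in L_C(\mu)$, and Remark \ref{detRemark} again yields a deterministic optimal policy. Alternatively, since $L_C(\mu)$ is convex and compact and $P \mapsto \int c\, dP$ is affine and lower semi-continuous, its set of minimizers is a nonempty closed face of $L_C(\mu)$ and hence contains an extreme point of $L_C(\mu)$, and the extreme points of $L_C(\mu)$ are deterministic strategic measures (elements of $L_A(\mu)$) by \cite{YukselSaldiSICON17}.

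I expect the main obstacle to be precisely the lower semi-continuity of the integral functional for a cost that is only lower semi-continuous and a priori unbounded: the elementary Portmanteau theorem must be upgraded through the truncation-plus-monotone-convergence device, and it is the non-negativity of $c$ that licenses passing to the limit in the truncation level. A secondary point to state carefully is that metrizability of the weak topology — which is what allows ``compact'' to be used interchangeably with ``sequentially compact'' and with ``closed'' — rests on the standard Borel hypothesis on the measurement and action spaces; and, for part (ii), the identification of the extreme points of $L_C(\mu)$ with deterministic strategic measures is imported from \cite{YukselSaldiSICON17} rather than reproved here.
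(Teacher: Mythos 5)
Your proof is correct and follows exactly the intended argument: the paper states Theorem \ref{existenceT} without proof as a result recalled from \cite{YukselSaldiSICON17}, and the direct method you use — sequential weak compactness of $L_R(\mu)$ (resp.\ $L_C(\mu)$) on a Polish space, lower semi-continuity of $P \mapsto \int c\, dP$ via truncation plus monotone convergence, and Blackwell's irrelevant-information theorem as in Remark \ref{detRemark} for the deterministic claim — is precisely the mechanism the paper itself deploys in the proofs of Theorems \ref{existenceRelaxed2} and \ref{existenceRelaxed3}. The only cosmetic point is that $L_C(\mu)$ is never defined in this paper, and you correctly import its definition and the identification of its extreme points with elements of $L_A(\mu)$ from \cite{YukselSaldiSICON17}.
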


However, we recall that unless certain conditions are imposed, the conditional independence property is not preserved under weak or setwise convergence and thus $L_R(\mu)$ is in general not compact; see \cite[Theorem 2.7]{YukselSaldiSICON17}.
 
We refer the reader to \cite{barbie2014topology}, and the references therein, for further related results on such intricacies on conditional independence properties. A sufficient condition for compactness of $L_R$ under the weak convergence topology was reported in \cite{gupta2014existence}; we re-state this result in a brief and different form below for reader's convenience:
\begin{theorem}\label{SuffCon1} \cite{gupta2014existence}
Consider a static team where the action sets $\mathbb{U}^i, i \in {\cal N}$ are compact. Furthermore, if the measurements satisfy
\[P(d{\bf y}|\omega_0) = \prod_{i=1}^n Q^i(dy^i|\omega_0),\]
where $Q^i(dy^i|\omega_0) = \eta^i(y^i,\omega_0) \nu^i(dy^i)$ for some positive measure $\nu^i$ and continuous $\eta^i$ so that for every $\epsilon >0$, there exists $\delta > 0$ so that for $\rho_i(a,b)<\delta$ (where $\rho_i$ is a metric on $\mathbb{Y}^i$)
\[|\eta^i(b,\omega_0) - \eta^i(a,\omega_0) | \leq \epsilon h^i(a,\omega_0),\]
with $\sup_{\omega_0} \int h^i(a,\omega_0) \nu^i(dy^i) < \infty$, then the set $L_R(\mu)$
 is weakly compact and if $c(\omega_0,{\bf u})$ is lower semi-continuous, there exists an optimal team policy (which is deterministic and hence in $L_A(\mu)$).
  \end{theorem}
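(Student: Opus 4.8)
The plan is to reduce everything to a single weak-compactness statement and then hand off to the machinery already in place: by Theorem~\ref{existenceT}(i) it suffices to show that $L_R(\mu)$ is weakly compact, since the lower semi-continuity of $c$ together with Blackwell's irrelevant-information theorem (Remark~\ref{detRemark}) then yields an optimal \emph{deterministic} team policy whose strategic measure lies in $L_A(\mu)$. Weak compactness of $L_R(\mu)$ splits into (a) tightness and (b) weak closedness. Part (a) is immediate: for a static team every $P\in L_R(\mu)$ has the form $P(d\omega_0,d{\bf y},d{\bf u})=\mu(d\omega_0,d{\bf y})\prod_k\Pi^k(du^k\mid y^k)$, so the $(\omega_0,{\bf y})$-marginal is the single fixed measure $\mu$ (tight on the Polish space $\Omega_0\times\prod_k\mathbb{Y}^k$) while each $u^k$ lives in the compact set $\mathbb{U}^k$; a finite product of tight families of marginals is tight, so $L_R(\mu)$ is tight and hence relatively weakly compact by Prokhorov's theorem. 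Everything therefore comes down to part (b): given $P_n\in L_R(\mu)$ with $P_n\to P$ weakly along a subsequence, exhibit randomized policies $\{\Pi^k\}$ whose induced strategic measure is $P$.

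For (b) I would first use the absolute continuity hypothesis $Q^i(dy^i\mid\omega_0)=\eta^i(y^i,\omega_0)\,\nu^i(dy^i)$ to pass to a reference model in which the measurements are independent of each other and of $\omega_0$. After a standard normalization we may take each $\nu^i$ to be a probability measure; set $\tilde\mu(d\omega_0,d{\bf y}):=P(d\omega_0)\prod_i\nu^i(dy^i)$ and, for the same policies $\Pi^i_n$ driving $P_n$, let $\tilde P_n:=P(d\omega_0)\prod_i\big(\nu^i(dy^i)\,\Pi^i_n(du^i\mid y^i)\big)$. Then $\tilde P_n$ is literally a product of the $N+1$ probability measures $P(d\omega_0)$ and $\xi^i_n(dy^i,du^i):=\nu^i(dy^i)\,\Pi^i_n(du^i\mid y^i)$, and $dP_n/d\tilde P_n=\prod_i\eta^i(y^i,\omega_0)$. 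Each $\xi^i_n$ has the fixed $\mathbb{Y}^i$-marginal $\nu^i$ and is supported on $\mathbb{Y}^i\times\mathbb{U}^i$ with $\mathbb{U}^i$ compact, so $\{\xi^i_n\}_n$ is tight; passing to a further subsequence, $\xi^i_n\to\xi^i$ weakly for each $i$, and since the $\mathbb{Y}^i$-marginal is preserved, $\xi^i(dy^i,du^i)=\nu^i(dy^i)\,\Pi^i(du^i\mid y^i)$ for a bona fide stochastic kernel $\Pi^i$. By the weak convergence of products of probability measures on Polish spaces (\cite[p.~57]{Par67}, cf.\ \cite[\S5]{serfozo1982convergence}), $\tilde P_n\to\tilde P:=P(d\omega_0)\prod_i\xi^i$ weakly.

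The decisive step, and the one I expect to be the main obstacle, is to carry the limit back through the Radon--Nikodym weight, i.e.\ to show $P=\big(\prod_i\eta^i(y^i,\omega_0)\big)\tilde P$. Granting this, $\big(\prod_i\eta^i(y^i,\omega_0)\big)\tilde P=\mu(d\omega_0,d{\bf y})\prod_i\Pi^i(du^i\mid y^i)$ is precisely the strategic measure of $\{\Pi^i\}$, so $P\in L_R(\mu)$ and closedness is proved. One cannot simply invoke weak convergence of $\tilde P_n$ here, since $\prod_i\eta^i(\cdot,\omega_0)$ is an unbounded density, and this is exactly where the structural hypotheses on the $\eta^i$ must be used: generically the conditional independence ``$u^k\leftrightarrow y^k\leftrightarrow(\text{rest})$'' is destroyed by a weak limit (cf.\ the intermediate counterexample of \cite{YukselSaldiSICON17}), and the $\eta^i$ conditions are designed to prevent this. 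For every bounded continuous $\phi$ we have $\int\phi\,dP=\lim_n\int\phi\prod_i\eta^i(\cdot,\omega_0)\,d\tilde P_n$; since the weight $\prod_i\eta^i(y^i,\omega_0)$ depends only on $(\omega_0,{\bf y})$ --- a marginal that equals the fixed measure $\tilde\mu$ under \emph{every} $\tilde P_n$ --- the family $\{\phi\prod_i\eta^i(\cdot,\omega_0)\}$ is uniformly integrable over $\{\tilde P_n\}$; continuity of $\eta^i$ makes each truncation $\min(R,\prod_i\eta^i(\cdot,\omega_0))\,\phi$ bounded continuous, so it passes to the weak limit $\tilde P$, and the equicontinuity/uniform-integrability furnished by $|\eta^i(b,\omega_0)-\eta^i(a,\omega_0)|\le\epsilon\,h^i(a,\omega_0)$ with $\sup_{\omega_0}\int h^i(\cdot,\omega_0)\,\nu^i(dy^i)<\infty$ makes the truncation error uniformly small in $n$ as $R\to\infty$. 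Letting $R\to\infty$ gives $\int\phi\,dP=\int\phi\prod_i\eta^i(\cdot,\omega_0)\,d\tilde P$ for all bounded continuous $\phi$, hence $P=\big(\prod_i\eta^i(y^i,\omega_0)\big)\tilde P\in L_R(\mu)$.

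Combining (a) and (b), $L_R(\mu)$ is weakly compact, and the existence of an optimal team policy, its being deterministic, and its strategic measure lying in $L_A(\mu)$ all follow from Theorem~\ref{existenceT}(i) with Remark~\ref{detRemark}. The only genuinely delicate point in this program is the transfer-back in the third paragraph; the remainder is a packaging of tightness, Prokhorov's theorem, and product-of-measures convergence.
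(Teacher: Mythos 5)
First, a point of reference: the paper does not actually prove Theorem~\ref{SuffCon1}; it states it as a review of a result from \cite{gupta2014existence}, so there is no in-paper proof to compare against. Judged on its own merits, your architecture is the right one and is consistent with the strategic-measures machinery used elsewhere in the paper (fixed-marginal policy spaces, products of weakly convergent measures as in the proof of Theorem~\ref{existenceRelaxed3}, and the reduction to Theorem~\ref{existenceT}(i) plus Remark~\ref{detRemark}). The tightness half and the final hand-off to the measurable-selection/lower-semicontinuity step are fine.

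There are, however, two concrete problems in the closedness half. (1) The ``standard normalization'' of $\nu^i$ is not innocuous and is where your argument currently breaks. The theorem only assumes $\nu^i$ is a positive measure (think of Lebesgue measure with Gaussian densities $\eta^i$), so $\xi^i_n=\nu^i(dy^i)\,\Pi^i_n(du^i\mid y^i)$ need not be a finite measure; then tightness, Prokhorov, and the product-convergence result of \cite[p.~57]{Par67} do not apply to $\{\xi^i_n\}$ or to $\tilde P_n$ as written. Replacing $\nu^i$ by an equivalent probability measure multiplies $\eta^i$ by a Radon--Nikodym factor that need not preserve continuity or the modulus condition, so the hypotheses do not automatically transfer. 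The clean repair is to take as the reference marginal the actual law $P(dy^i)=\bar\eta^i(y^i)\nu^i(dy^i)$ with $\bar\eta^i(y^i)=\int\eta^i(y^i,\omega_0)P(d\omega_0)$ (this is exactly the fixed-marginal action space of item (ii$'$) in Section~\ref{newDPF}); but then the transfer weight becomes $\prod_i\eta^i(y^i,\omega_0)/\bar\eta^i(y^i)$, and one must verify continuity of $\bar\eta^i$ and of the ratio --- and this is precisely where the hypothesis $|\eta^i(b,\omega_0)-\eta^i(a,\omega_0)|\le\epsilon\,h^i(a,\omega_0)$ with $\sup_{\omega_0}\int h^i\,d\nu^i<\infty$ has to be brought in. (2) Relatedly, your truncation/uniform-integrability step never actually uses that hypothesis: as written it needs only that the weight $w=\prod_i\eta^i(y^i,\omega_0)$ is continuous and satisfies $\int w\,d\tilde\mu=1$, both of which are automatic once $\nu^i$ is a probability measure. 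Attributing the control of the truncation error to the $h^i$ condition is a misdiagnosis; if your route really needed no more than continuity of $\eta^i$ plus finiteness of $\nu^i$, you would be proving a strictly stronger statement, which should make you suspicious that the unused hypothesis is in fact earning its keep at the normalization step you skipped. Fix (1) and identify honestly where (if anywhere) the $h^i$ condition enters, and the proof is complete.
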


The results in \cite{gupta2014existence} also apply to static reductions for sequential dynamic teams, and a class of teams with unbounded cost functions and non-compact action spaces that however satisfies some moment-type cost functions leading to a tightness condition on the set of strategic measures leading to a finite cost. In particular, the existence result applies to the celebrated counterexample of Witsenhausen \cite{wit68}. 

Theorems \ref{existenceControlTopology} and \ref{existenceRelaxed2} provide weaker conditions when compared with Theorem \ref{SuffCon1}.

For a class of sequential teams with perfect recall, we established the existence of optimal team policies through Theorem \ref{StrategicCharacterization} in \cite{YukselSaldiSICON17}. Note that the cost function is given by $c(\omega_0, {\bf u})$, where $\omega_0$ is an exogenous random variable. 
\begin{theorem}\label{SuffCon2}\cite[Theorem 2.9]{YukselSaldiSICON17}
Consider a sequential team with a classical information structure with the further property that $\sigma(\omega_0) \subset \sigma(y^1)$ (under every policy, so that $y^1$ contains $\omega_0$). Suppose further that $\prod_k (\mathbb{Y}^k \times \mathbb{U}^k)$ is compact. If $c$ is lower semi-continuous and each of the kernels $p_n$ (defined in (\ref{kernelDefn})) is weakly continuous so that
\begin{eqnarray}\label{weakConKer}
\int f(y^n) p_n(dy^n | \omega_0,y^{1},\ldots,y^{n-1},u^{1},\cdots,u^{n-1})
\end{eqnarray} 
is continuous in $\omega_0,u^{1},\cdots,u^{n-1}$ for every continuous and bounded $f$, there exists an optimal team policy which is deterministic.
\end{theorem}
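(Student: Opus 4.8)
The plan is to follow the standard dynamic-programming-with-measurable-selection argument, but executed over the strategic-measure state space $\pi_n$ rather than the primitive state. First I would invoke the static reduction: since we have a sequential team with classical information structure and $\sigma(\omega_0)\subset\sigma(y^1)$, DM~$1$ sees $\omega_0$, and by classicality each subsequent DM's information contains the previous one's. The key structural fact is that under a classical (perfect-recall) information structure, $\pi_n$ — the conditional law of the history given DM~$n$'s information — evolves as a controlled Markov chain on $\mathcal{P}(\Omega_0\times\prod_{k\le n}(\mathbb{Y}^k\times\mathbb{U}^k))$ whose transition kernel is built from $p_n$ and the chosen $\gamma^n$, exactly as in the formulation of Section~\ref{newDPF}. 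Because $\prod_k(\mathbb{Y}^k\times\mathbb{U}^k)$ is compact, the extended state space $\mathcal{P}(\cdots)$ is compact metric under weak convergence, and the action sets $\Gamma^n(\pi_n)$ (probability measures on $\mathbb{U}^n$ given $y^n$, consistent with $\pi_n$) are compact with a measurable graph by Theorem~\ref{FeinbergStrategicMeasurability3}.

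Next I would run the backward induction, establishing at each stage that the one-stage value function $J_n(\pi_n,\gamma^n)$ is \emph{lower semi-continuous} jointly in $(\pi_n,\gamma^n)$, that $J_n(\pi_n)=\inf_{\gamma^n}J_n(\pi_n,\gamma^n)$ is lower semi-continuous in $\pi_n$ and attains its infimum, and that an optimal $\gamma^{n,*}$ can be selected measurably in $\pi_n$. The terminal step uses lower semi-continuity of $c$ in $(\omega_0,\mathbf{u})$ directly. The inductive step requires showing that the map $(\pi_{n-1},\gamma^{n-1})\mapsto\pi_n=T_{n-1}(\gamma^{n-1})\pi_{n-1}$ is continuous (or at least that $\pi_{n-1},\gamma^{n-1}\mapsto\int J_n(\pi_n)$ is l.s.c.); this is precisely where the weak continuity hypothesis~\eqref{weakConKer} on the kernels $p_n$ enters — it guarantees that the Bayesian update defining $\pi_n$ is weakly continuous, and composing with the l.s.c. function $J_n$ preserves lower semi-continuity (one may need a truncation/monotone-convergence argument as in the proof of Theorem~\ref{existenceRelaxed2} since $c\ge 0$). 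Compactness of the action set then yields attainment and, via Lemma~\ref{measSSchal} and the Kuratowski--Ryll-Nardzewski type selection theorem quoted after it, a measurable optimal selector $\gamma^{n-1,*}(\pi_{n-1})$.

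Chaining these selectors from $n=N$ down to $n=1$ produces a team policy $\underline{\gamma}^*$; by Theorem~\ref{standardDPRevised} its cost equals $J_1(\pi_0)$, which equals $J^*$, so it is optimal. Determinism follows from Remark~\ref{detRemark} (Blackwell's irrelevant information theorem together with \cite[Theorems 2.3 and 2.5]{YukselSaldiSICON17}), or one may simply observe that the measurable-selection theorems already deliver deterministic selectors.

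The main obstacle I anticipate is verifying continuity of the controlled transition $\pi_{n-1}\mapsto\pi_n$ under the new state formulation: the Bayesian filtering update is a ratio, and weak continuity of $p_n$ controls the numerator but one must argue the denominator (the measurement density/normalization) stays bounded away from zero along converging sequences, or else work only with the l.s.c. direction and the truncation trick. The perfect-recall/classical assumption is what makes this tractable — it ensures $\pi_n$ is a genuine sufficient statistic so that the problem genuinely reduces to a centralized POMDP-type problem on the $\pi_n$-chain, where the measurable-selection machinery of \cite[Theorem 3.3.5 and Appendix D]{HernandezLermaMCP} applies verbatim.
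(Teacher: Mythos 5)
The paper does not actually prove this statement here: it is quoted verbatim from \cite[Theorem 2.9]{YukselSaldiSICON17}, and the argument behind it (visible in the proof of Theorem \ref{SuffCon2'''} in this paper, which is modelled on it) is a \emph{direct method on strategic measures}, not dynamic programming. Namely: compactness of $\prod_k(\mathbb{Y}^k\times\mathbb{U}^k)$ makes ${\cal P}(\Omega_0\times\prod_k(\mathbb{Y}^k\times\mathbb{U}^k))$ weakly compact; the characterization of $L_R(\mu)$ in Theorem \ref{StrategicCharacterization} is shown to be preserved under weak limits --- the kernel condition (\ref{convD1}) because of the weak-continuity hypothesis (\ref{weakConKer}), and the conditional-independence condition (\ref{convD2}) because under a classical (perfect-recall) structure one may first expand each DM's information to include all past actions, making (\ref{convD2}) vacuous, and then discard the expansion since the optimal policy is deterministic (Remark \ref{detRemark}); closedness plus compactness plus lower semi-continuity of $c$ then yield an optimizer over $L_R(\mu)$. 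Your backward-induction/measurable-selection route over the $\pi_n$-chain is a genuinely different decomposition; it is the one the paper uses for Theorems \ref{existenceControlTopology} and \ref{existenceRelaxed2}, but notably only for \emph{static or static-reducible teams with independent measurements}, where the transition has the product form $\pi(dx)\,Q(dy)\,\gamma(du|y)$ with $Q$ fixed.

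Two concrete gaps in your version. First, the continuity of $(\pi_{n-1},\gamma^{n-1})\mapsto\pi_n$ that drives Step (ii) of Theorem \ref{existenceControlTopology} does not transfer verbatim to a general classical dynamic team: here the kernel $p_n(dy^n\mid h_{n-1})$ depends on the entire history, and the hypothesis (\ref{weakConKer}) only gives continuity in $(\omega_0,u^1,\ldots,u^{n-1})$, not in $y^1,\ldots,y^{n-1}$; you would need either joint continuity or a $w$-$s$-type argument (setwise in the measurement coordinates) to close the induction. Relatedly, your anticipated obstacle about the Bayesian normalization is a red herring in the paper's formulation: the state $\pi_n$ of Section \ref{newDPF} is the \emph{unconditional} joint law of $(\omega_0,\mathbf{y},\mathbf{u})$ up to time $n$, so the transition $T_n(\gamma^n)$ is linear with no denominator --- your identification of $\pi_n$ as a conditional (filtering) law is not the paper's state and would indeed create the ratio problem you fear. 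Second, you assert compactness and closedness of $\Gamma^n(\pi_n)$ without addressing the point the paper explicitly flags: conditional independence of $u^n$ and the past given $y^n$ is \emph{not} preserved under weak or setwise limits in general, so the action sets are not obviously closed. It is precisely the classical information structure, via the expand-then-discard argument above, that rescues closedness; without making that step explicit the selection machinery of \cite[Appendix D]{HernandezLermaMCP} cannot be applied.
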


A further existence result along similar lines, for a class of static teams, is presented next. This result refines \cite[Theorem 3.5]{YukselSaldiSICON17}, by relaxing the weak continuity condition there on the kernel (i.e., the condition that $\int f(y^n) P(dy^n | y^{n-1})$ is continuous in $y^{n-1}$ for every continuous and bounded $f$).

\begin{theorem}\label{SuffCon2'''}
Consider a static team with a classical information structure (that is, with an expanding information structure so that $\sigma(y^n) \subset \sigma(y^{n+1}), n \geq 1$). Suppose further that $\prod_k (\mathbb{Y}^k \times \mathbb{U}^k)$ is compact. If 
\[\tilde{c}(y^1,\cdots,y^N,u^1,\cdots,u^N):=E[c(\omega_0,{\bf u}) | {\bf y}, {\bf u}]\]
is jointly lower semi-continuous in ${\bf u}$ for every ${\bf y}$, there exists an optimal team policy which is deterministic.
\end{theorem}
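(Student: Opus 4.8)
The plan is to reduce the problem to a backward dynamic program over the nested information and to exploit the fact that, in a classical (expanding) structure, the conditional expectation at each stage is taken against a measurement kernel that is \emph{held fixed} once the previous observation is fixed; propagating lower semi-continuity in the actions then requires only Fatou's lemma and pointwise-in-$u$ lower semi-continuity, never any continuity of the kernels --- which is exactly what allows us to delete condition~(\ref{weakConKer''}). First I would record the standard-Borel preliminaries: since $\sigma(y^k)\subset\sigma(y^n)$ for $k<n$, each $y^k$ is a measurable function $y^k(y^n)$ of $y^n$; the regular conditional probabilities $\mu(d\omega_0\mid{\bf y})$ and $P(dy^n\mid y^{n-1})$ exist as Borel stochastic kernels on the standard Borel spaces involved; and, setting $\tilde c({\bf y},{\bf u}):=\int c(\omega_0,{\bf u})\,\mu(d\omega_0\mid{\bf y})$, Theorem~\ref{RecMeasLS}(ii) makes $\tilde c$ lower semi-analytic, while by hypothesis it is lower semi-continuous in ${\bf u}$ for every ${\bf y}$; finally $J(\underline{\gamma})=\int\tilde c({\bf y},\underline{\gamma}({\bf y}))\,P(d{\bf y})$ by the tower property.

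Next I would run the recursion $\tilde c_N:=\tilde c$, then $c_n(y^{1:n},u^{1:n-1}):=\inf_{u^n\in\mathbb{U}^n}\tilde c_n(y^{1:n},u^{1:n})$ and $\tilde c_{n-1}(y^{1:n-1},u^{1:n-1}):=\int c_n(y^{1:n-1},y^n,u^{1:n-1})\,P(dy^n\mid y^{n-1})$, terminating with $J^{\sharp}:=\int c_1(y^1)\,P(dy^1)$, maintaining the invariant that each $\tilde c_n,c_n$ is nonnegative, lower semi-analytic (hence universally measurable, so the integrals are well-defined), and that $\tilde c_n$ is jointly lower semi-continuous in $u^{1:n}$ for every fixed $y^{1:n}$. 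The lower-semi-analyticity propagates via Theorem~\ref{RecMeasLS}(i)--(ii) exactly as in the well-posedness proof. For lower semi-continuity in the actions: passing from $\tilde c_n$ to $c_n$ uses compactness of $\mathbb{U}^n$ together with the elementary fact (already invoked in the proof of Theorem~\ref{existenceRelaxed2}) that $x\mapsto\inf_{u\in\mathbb{U}}d(x,u)$ is lower semi-continuous when $d$ is and $\mathbb{U}$ is compact; passing from $c_n$ to $\tilde c_{n-1}$ uses Fatou's lemma against the \emph{fixed} measure $P(dy^n\mid y^{n-1})$, i.e.\ for $u^{(m)}\to u$ one has $\liminf_m\tilde c_{n-1}(y^{1:n-1},u^{(m)})\ge\int\big(\liminf_m c_n(y^{1:n-1},y^n,u^{(m)})\big)P(dy^n\mid y^{n-1})\ge\tilde c_{n-1}(y^{1:n-1},u)$, the first inequality by nonnegativity and the second by the pointwise lower semi-continuity of $c_n$ in its action variable.

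Then I would construct the optimal policy and verify it. Inductively, $\gamma^{n,*}(y^n)$ is a measurable selection (Lemma~\ref{measSSchal} and its state-dependent version) of the arg-min over $u^n\in\mathbb{U}^n$ of $\tilde c_n\big(y^{1:n}(y^n),\gamma^{1,*}(y^1(y^n)),\dots,\gamma^{n-1,*}(y^{n-1}(y^n)),u^n\big)$, a set that is nonempty (compact $\mathbb{U}^n$, lower semi-continuous objective) and closed-valued. A tower-property computation gives $J(\underline{\gamma}^{*})=J^{\sharp}$, and running the same computation with an arbitrary $\underline{\gamma}$ together with $\tilde c_n\ge c_n$ and $\int c_n\,P(dy^n\mid y^{n-1})=\tilde c_{n-1}$ yields $J(\underline{\gamma})\ge J^{\sharp}$. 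Hence $\underline{\gamma}^{*}$ is optimal, and it is deterministic by construction, consistently with Remark~\ref{detRemark}.

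The main obstacle I anticipate is the measurability bookkeeping: ensuring that $\tilde c$ and every $\tilde c_n,c_n$ is genuinely lower semi-analytic \emph{jointly} in $(y^{1:n},u^{1:n})$ --- not merely for each fixed ${\bf y}$ --- so that the conditional expectations and the measurable selections are legitimate, and checking that no step secretly invokes joint continuity in $({\bf y},{\bf u})$. The conceptual heart of the improvement over Theorem~\ref{SuffCon2''} is precisely that the backward integration over $y^n$ is against a measure held fixed once $y^{n-1}$ is fixed, so Fatou suffices and the map $y^{n-1}\mapsto P(dy^n\mid y^{n-1})$ need not be weakly continuous.
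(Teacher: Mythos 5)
Your argument is correct, but it is a genuinely different proof from the one in the paper. The paper works entirely with strategic measures: it replaces the weak topology by Sch\"al's $w$-$s$ topology, uses the fact that the set of measures on $\prod_k(\mathbb{Y}^k\times\mathbb{U}^k)$ with fixed marginals on $\prod_k\mathbb{Y}^k$ is relatively compact in that topology \cite{schal1975dynamic,balder2001}, expands the information structure so that each DM sees all past actions (making the conditional-independence constraint trivially closed), checks that the constraint $P(dy^n\mid h_{n-1})=P(dy^n\mid y^{n-1})$ survives $w$-$s$ limits, and then gets existence from lower semi-continuity of the cost on a sequentially compact set, with determinism via Remark~\ref{detRemark} and a final observation that the information expansion was redundant. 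You instead run a backward dynamic program directly on the nested information, propagating (a) lower semi-continuity in the actions via partial minimization over compact $\mathbb{U}^n$ and Fatou against the \emph{fixed} kernel $P(dy^n\mid y^{n-1})$, and (b) measurability, then closing with a verification argument. Your route is more elementary and makes transparent exactly why the weak-continuity hypothesis (\ref{weakConKer''}) of Theorem~\ref{SuffCon2''} can be dropped; the paper's route avoids all measurable-selection issues and yields optimality over randomized policies for free. (Your bound $J(\underline{\gamma})\ge J^{\sharp}$ extends to randomized policies since the pointwise infimum over $u^n$ lower-bounds any average, so nothing is lost there either.)

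One technical point you should tighten rather than merely flag: do not retreat to lower semi-analyticity. If the value functions are only lower semi-analytic, the exact selectors $\gamma^{k,*}$ provided by the Bertsekas--Shreve machinery \cite{BertsekasShreve} are merely universally measurable, and composing a lower semi-analytic $\tilde c_n$ with universally measurable $\gamma^{1,*},\dots,\gamma^{n-1,*}$ at the next stage need not be lower semi-analytic, which threatens your induction (it can be repaired by passing to Borel versions off $P(d{\bf y})$-null sets, but this is exactly the bookkeeping you worried about). The cleaner fix is to note that $\tilde c_N({\bf y},{\bf u})=\int c(\omega_0,{\bf u})\mu(d\omega_0\mid{\bf y})$ is jointly \emph{Borel}, and that the selection theorem of Himmelberg--Parthasarathy--VanVleck \cite{himmelberg1976optimal} underlying Lemma~\ref{measSSchal} applies to integrands that are jointly Borel and lower semi-continuous in the action over compact (Borel-graph) action sets: it returns a Borel value function and a Borel selector. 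With that, every $\tilde c_n$, $c_n$ stays Borel (integration against the Borel kernel $P(dy^n\mid y^{n-1})$ preserves this), every $\gamma^{n,*}$ is Borel, the compositions through $y^k(y^n)$ are Borel, and the appeal to Theorem~\ref{RecMeasLS} and to universal measurability becomes unnecessary. With that substitution the proof is complete and correct.
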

\proof Different from Theorem \ref{SuffCon2}, we eliminate the use of $\omega_0$, and study the properties of the set of strategic measures. Different from \cite[Theorem 3.5]{YukselSaldiSICON17}, we relax weak continuity. Once again, instead of the weak topology, we will use the $w$-$s$ topology \cite{Schal}.

As in the proof of Theorem \ref{SuffCon2}, when $\prod_k \mathbb{Y}^k \times \mathbb{U}^k$ is compact, the set of all probability measures on $\prod_k \mathbb{Y}^k \times \mathbb{U}^k$ forms a weakly compact set. Since the marginals on $\prod_k \mathbb{Y}^k$ is fixed, \cite[Theorem 3.10]{Schal} (or  \cite[Theorem 2.5]{balder2001}) establishes that the set of all probability measures with a fixed marginal on $\prod_k \mathbb{Y}^k$ is relatively compact under the $w$-$s$ topology. Therefore, it suffices to ensure the closedness of the set of strategic measures, which leads to the sequential compactness of the set under this topology. To facilitate such a compactness condition, as earlier we first {\it expand} the information structure so that DM $k$ has access to all the previous actions $u^{1},\cdots,u^{k-1}$ as well. Later on, we will see that this expansion is redundant. With this expansion, any $w$-$s$ converging sequence of strategic measures will continue satisfying (\ref{convD2}) in the limit due to the fact that there is no conditional independence property in the sequence since all the information is available at DM $k$. That is, $P_n(du^n | y^n,y_{[0,n-1]},u_{[0,n-1]})$ satisfies the conditional independence property trivially as all the information is available. On the other hand, for each element in the sequence of conditional probability measures, the conditional probability for the measurements writes as
$P(dy^n | y_{[0,n-1]},u_{[0,n-1]}) = P(dy^n | y^{n-1})$.  We wish to show that this also holds for the $w$-$s$ limit measure. Now, we have that for every $n$, $y^n \leftrightarrow y^{n-1} \leftrightarrow h_{n-1}$ forms a Markov chain. By considering the convergence properties only on continuous functions and bounded $f$, as in (\ref{convD1}), with $P_m \to P$ weakly, we have that
\begin{eqnarray}
&&\int P(dy^n | y^{n-1}) P_m(dh_{n-1}) f(y^n,h_{n-1}) = \int \bigg( P(dy^n | y^{n-1})  f(y^n,h_{n-1}) \bigg) P_m(dh_{n-1}) \nonumber \\
&& \qquad \to  \int \bigg( P(dy^n | y^{n-1})  f(y^n,h_{n-1}) \bigg) P(dh_{n-1}) =  \int P(dy^n | y^{n-1}) P(dh_{n-1}) f(y^n,h_{n-1}) \nonumber
\end{eqnarray}
Here, $\bigg( P(dy^n | y^{n-1})   f(y^n,h_{n-1}) \bigg)$ is not continuous in $y^{n-1}$, but it is in $h_{n-2}$ and $u^{n-1}$ by an application of the dominated convergence theorem, and since \[P_m(dy^1,\ldots,dy^{n-1},du^{1},\ldots,du^{n-1}) \to P(dy^1,\ldots,dy^{n-1},du^{1},\ldots,du^{n-1}),\]
 in the $w$-$s$ sense (setwise in the measurement variable coordinates), convergence holds. 
Thus, (\ref{convD1}) is also preserved.
 Hence, for any $w$-$s$ converging sequence of strategic measures satisfying (\ref{convD1})-(\ref{convD2}) so does the limit since the team is static and with perfect-recall. By \cite[Theorem 3.7]{Schal}, and the generalization of Portmanteau theorem for the $w$-$s$ topology, the lower semi-continuity of $\int \mu(d{\bf y}, d{\bf u}) \tilde{c}({\bf y},{\bf u})$ over the set of strategic measures leads to the existence of an optimal strategic measure. As a result, the existence follows from similar steps to that of Theorem \ref{existenceT}. Now, we know that an optimal policy will be deterministic (see Remark \ref{detRemark}). Thus, a deterministic policy may not make use of randomization, therefore DM $k$ having access to $\{y^{k},y^{k-1},y^{k-2}, \cdots\}$ is informationally equivalent to him having access to $\{y^{k},(y^{k-1},u^{k-1}),(y^{k-2},u^{k-2})\}$ for an optimal policy. Thus, an optimal team policy exists.
\endproof


\subsection{Discussion}
We thus report that Theorem \ref{existenceRelaxed3} (for static teams or dynamic teams with an independent-measurements reduction) and Theorems \ref{SuffCon2} and \ref{SuffCon2'''} (for sequential teams that do not allow an independent-measurements reduction) are the most general existence results in the literature, to our knowledge, for sequential team problems. These results complement each other and cover a large class of decentralized stochastic control problems. 

In view of completeness, we also note the following: Recently, \cite{SaldiArXiv2017} presented an alternative proof approach for the existence problem and arrived at existence results through constructing a new topology on the space of team policies; the existence result presented in \cite{SaldiArXiv2017} is implied by Theorem \ref{existenceRelaxed3}. Another recent work \cite{gupta2020existence} has established existence results for a setup where either the measurements are countable or there is a common information among decision makers which is countable space-valued with the private information satisfying an absolute continuity condition; as noted earlier in the paper, static reduction applies in both such setups and the results presented in this paper (notably Theorem \ref{existenceRelaxed3}) generalize those reported in \cite{gupta2020existence}. We note also that the use of $w$-$s$ topology used in Theorem \ref{existenceRelaxed3} significantly relaxes the requirements of continuity.


\subsection{Some applications and revisiting classical existence results for single-DM stochastic control}  
\subsubsection{Witsenhausen's counterexample with Gaussian variables}\label{WitsenSection}
Consider the celebrated Witsenhausen's counterexample \cite{wit68} as depicted in Figures \ref{chap3figWitsenOrig} and \ref{chap3figWitsen}: This is a dynamic non-classical team problem with $y^1$ and $w^1$ zero-mean independent Gaussian random variables with unit variance and $u^1=\gamma^1(y^1)$, $u^2 = \gamma^2(u^1+w^1)$ and the cost function $c(\omega,u^1,u^2)= k(y^1-u^1)^2 + (u^1-u^2)^2$ for some $k > 0$. Witsenhausen's counterexample can be expressed, through a change of measure argument (also due to Witsenhausen) as follows. The static reduction proceeds as follows:
\begin{eqnarray}\label{stW}
&&\int (k(u^1-y^1)^2 + (u^1-u^2)^2) Q(dy^1)\gamma^1(du^1|y^1)\gamma^2(du^2|y^2)P(dy^2|u^1) \nonumber \\
&& = \int (k(u^1-y^1)^2 + (u^1-u^2)^2) Q(dy^1) \gamma^1(du^1|y^1)\gamma^2(du^2|y^2)\eta(y^2-u^1) dy^2 \nonumber \\
&& = \int \bigg( (k(u^1-y^1)^2 + (u^1-u^2)^2) \gamma^1(du^1|y^1)\gamma^2(du^2|y^2) {\eta(y^2-u^1)  \over \eta(y^2)} \bigg) Q(dy^1) \eta(y^2)dy^2 \nonumber \\
&& = \int \bigg( (k(u^1-y^1)^2 + (u^1-u^2)^2) \gamma^1(du^1|y^1)\gamma^2(du^2|y^2) {\eta(y^2-u^1)  \over \eta(y^2)} \bigg) Q(dy^1) Q(dy^2) \nonumber
\end{eqnarray}
where $Q$ denotes a Gaussian measure with zero mean and unit variance and $\eta$ its density. Since the optimal policy for $\gamma^2(y^2) = E[u^1|y^1]$ and $E[(E[u^1|y^1])^2] \leq E[(u^1)^2]$, it is evident with a two-stage analysis (see \cite[p. 1701]{gupta2014existence}) that without any loss we can restrict the policies to be so that $E[(u^i)^2] \leq M$ for some finite $M$, for $i=1,2$; this ensures a weak compactness condition on both $\hat{\gamma}^1$ and $\hat{\gamma}^2$. Since the reduced cost $\bigg( (k(u^1-y^1)^2 + (u^1-u^2)^2) {\eta(y^2-u^1)  \over \eta(y^2)} \bigg)$ is continuous in the actions, Theorem \ref{existenceRelaxed2} applies.

%

\subsubsection{Existence for partially observable Markov Decision Processes (POMDPs)}

Consider a partially observable stochastic control problem (POMDP) with the following dynamics.
$$x_{t+1}=f(x_t,u_t,w_t), \quad \quad y_{t}=g(x_t,v_t).$$
Here, $x_t$ is the $\mathbb{X}$-valued state, $u_t$ is the $\mathbb{U}$-valued the control, $y_t$ is the $\mathbb{Y}$-valued measurement process. In this section, we will assume that these spaces are finite dimensional real vector spaces. Furthermore, $(w_t,v_t)$ are i.i.d noise processes and $\{w_t\}$ is independent of $\{v_t\}$. The controller only has causal access to $\{y_t\}$: A deterministic admissible control policy $\Pi$ is a sequence of functions $\{\gamma_t\}$ so that $u_t = \gamma(y_{[0,t]};u_{[0,t-1]})$. 
The goal is to minimize
\[E^{\Pi}_{x_0} [\sum_{t=0}^{T-1}  c(x_t,u_t)],\]
for some continuous and bounded $c: \mathbb{X} \times \mathbb{U} \to \mathbb{R}_+$. 

Such a problem can be viewed as a decentralized stochastic control problem with increasing information, that is, one with a {\it classical} information structure.

Any POMDP can be reduced to a (completely observable) MDP \cite{Yus76}, \cite{Rhe74}, whose states are the posterior state distributions or {\it beliefs} of the observer. A standard approach for solving such problems then is to reduce the partially observable model to a fully observable model (also called the belief-MDP) by defining
\[\pi_t(A):=P(x_t \in A | y_{[0,t]},u_{[0,t-1]}), A \in {\cal B}(\mathbb{X})\]
and observing that $(\pi_t, u_t)$ is a controlled Markov chain where $\pi_t$ is ${\cal P}(\mathbb{X})$-valued with ${\cal P}(\mathbb{X})$ being the space of probability measures on $\mathbb{X}$ under the weak convergence topology. Through such a reduction, existence results can be established by obtaining conditions which would ensure that the controlled Markovian kernel for the belief-MDP is weakly continuous, that is if $\int F(\pi_{t+1}) P(d\pi_{t+1} | \pi_t = \pi, u_t= u)$ is jointly continuous (weakly) in $\pi$ and $u$ for every continuous and bounded function $F$ on ${\cal P}(\mathbb{X})$. 

This was studied recently in \cite[Theorem 3.7, Example 4.1]{FeKaZg14} and \cite{KSYWeakFellerSysCont} (see also \cite{budhiraja2002invariant} in a control-free context). In the context of the example presented, if $f(\cdot,\cdot,w)$ is continuous and $g$ has the form: $y_t = g(x_t) + v_t$, with $g$ continuous and $w_t$ admitting a continuous density function $\eta$, an existence result can be established building on the measurable selection criteria under weak continuity \cite[Theorem 3.3.5, Proposition D.5]{HernandezLermaMCP}, provided that $\mathbb{U}$ is compact. 

On the other hand, through Theorem \ref{SuffCon2'''}, such an existence result can also be established by obtaining a static reduction under the aforementioned conditions. Indeed, through (\ref{c_sDefn}), with $\eta$ denoting the density of $v_n$, we have $P(y_n \in B | x_n) = \int_B \eta(y - g(x_n)) dy$. With $\eta$ and $g$ continuous and bounded, taking $y^n := y_n$, by writing $x_{n+1} =f(x_{n},u_n,w_n) = f(f(x_{n-1},u_{n-1},w_{n-1}),u_n,w_n)$, and iterating inductively to obtain \[x_{n+1} = h_n(x_0,{\bf u}_{[0,n-1]}, {\bf w}_{[0,n-1]}),\] for some $h_n$ which is continuous in ${\bf u}_{[0,n-1]}$ for every fixed $x_0, {\bf w}_{[0,n-1]}$, one obtains a reduced cost (\ref{c_sDefn}) that is a continuous function in the control actions. Theorem \ref{SuffCon2'''} then implies the existence of an optimal control policy. \sy{This reasoning is also applicable when the measurements are not additive in the noise but with $P(y_n \in B | x_n=x) = \int_B f(y, x) \eta(dy)$ for some $f$ continuous in $x$ and $\eta$ a reference measure.}

\subsubsection{Revisiting fully observable Markov Decision Processes with the construction presented in the paper}

Consider a fully observed Markov decision process where the goal is to minimize
\[E^{\Pi}_{x_0} [\sum_{t=0}^{T-1}  c(x_t,u_t)],\]
for some continuous and bounded $c: \mathbb{X} \times \mathbb{U} \to \mathbb{R}_+$. Suppose that the controller has access to $x_{[0,t]}, u_{[0,t-1]}$ at time $t$. This system can always be viewed as a sequential team problem with a classical information structure. Under the assumption that the transition kernel according to the usual formulation, that is $P(dx_1 | x_0=x,u_0=u)$ is weakly continuous (in the sense discussed in the previous application above), it follows that the transition kernel according to the formulation introduced in Section \ref{newDPF} is also weakly continuous by an application \cite[Theorem 3.5]{serfozo1982convergence}. It follows that the condition in Lemma \ref{suffCondUpperSemiCont} holds when $\mathbb{U}$ is compact, and hence the existence of an optimal policy follows. A similar analysis is applicable when one considers the case where $P(dx_1 | x_0=x,u_0=u)$ is strongly continuous in $u$ for every fixed state $x$ and the bounded cost function is continuous only in $u$ (this is another typical setup where measurable selection conditions hold \cite[Chapter 3]{HernandezLermaMCP}).

\section{Conclusion}
For sequential dynamic teams with standard Borel measurement and control action spaces, we introduced a general controlled Markov and dynamic programming formulation, established its well-posedness and provided new existence results for optimal policies. Our dynamic program builds on, but significantly modifies, Witsenhausen's standard form, and the formulation here through our recent work \cite{YukselSaldiSICON17} allows for a standard Borel controlled state dynamics. This leads to a well-defined dynamic programming recursion for a very general class of sequential decentralized stochastic control problems. In addition, refined existence results have been obtained for optimal policies; these generalize prior results in the literature.

\section{Acknowledgements}
The author is grateful to the reviewers for their comments which have helped improve the paper. The author is also grateful to Prof. Naci Saldi and Sina Sanjari for the detailed suggestions and corrections they have noted. The author is also grateful to Prof. Tamer Ba\c{s}ar for many discussions, including those during the earlier stages of \cite{YukselBasarBook}; the counterexample presented in \cite{YukselBasarBook} used in Section \ref{counterExampleSection} is due to him.  

\bibliographystyle{amsplain}

\end{document}